\gdef\n@te#1#2{\leavevmode\vadjust{%
 {\setbox\z@\hbox to\z@{\strut#1}%
  \setbox\z@\hbox{\raise\dp\strutbox\box\z@}\ht\z@=\z@\dp\z@=\z@%
  #2\box\z@}}}
\gdef\leftnote#1{\n@te{\hss#1\quad}{}}
\gdef\rightnote#1{\n@te{\quad\kern-\leftskip#1\hss}{\moveright\hsize}}
\gdef\?{\FN@\qumark}
\gdef\qumark{\ifx\next"\DN@"##1"{\leftnote{\rm##1}}\else
 \DN@{\leftnote{\rm??}}\fi{\rm??}\next@}}
\DeclareFontFamily{OT1}{wncyr}{\hyphenchar\font45 }
\DeclareFontShape{OT1}{wncyr}{m}{n}{%
   <5> <6> <7> <8> <9> gen * wncyr
   <10> <10.95> <12> <14.4> <17.28> <20.74>  <24.88>wncyr10}{}
\DeclareFontShape{OT1}{wncyr}{m}{it}{%
   <5> <6> <7> <8> <9> gen * wncyi
   <10> <10.95> <12> <14.4> <17.28> <20.74> <24.88> wncyi10}{}
\DeclareFontShape{OT1}{wncyr}{m}{sc}{%
   <5> <6> <7> <8> <9> <10> <10.95> <12> <14.4>
   <17.28> <20.74> <24.88>wncysc10}{}
\DeclareFontShape{OT1}{wncyr}{b}{n}{%
   <5> <6> <7> <8> <9> gen * wncyb
   <10> <10.95> <12> <14.4> <17.28> <20.74> <24.88>wncyb10}{}
\DeclareMathSymbol{\twoheadrightarrow} {\mathrel}{AMSa}{"10}
\theoremstyle{plain}
\newtheorem{theorem}{Theorem}
\newtheorem*{thmnonumber}{Theorem}
\newtheorem{proposition}{Proposition}
\newtheorem{lemma}{Lemma}
\newtheorem{corollary}{Corollary}
\theoremstyle{definition}
\newtheorem{example}{Example}
\newtheorem{remark}{\it Remark}
\theoremstyle{remark}
\def\Q{{\mathbb Q}}
\def\C{{\mathbb C}}
\def\Z{{\mathbb Z}}
\begin{document}

\title[Root lattices in number fields]
{Root lattices
in number fields}

\author[Vladimir L. Popov]{Vladimir L. Popov${}^{1}$}
\thanks{\today.
\newline \indent
${}^1$  Steklov Mathematical Institute,
Russian Academy of Sciences, Gub\-kina 8, Mos\-cow
119991, Russia, {\rm popovvl@mi-ras.ru}.
}

\author[Yuri G. Zarhin]{Yuri G. Zarhin${}^{2}$}
\thanks{${}^{2}
$ Department of Mathematics,
Pennsylvania State University, Uni\-versity Park, PA
16802, USA, {\rm zarhin@math.psu.edu}.\\
\indent The second named author (Y.Z.) was
partially supported by Simons Foundation
Collaboration grant $\# 585711$.
Part of this work was done during his stay at
the Weizmann Institute of Science
(December 2019--January 2020), whose
hospitality and support are gratefully acknowledged.
}

\begin{abstract}
We explore whether a root lattice may be similar to
the lattice
$\mathscr O$
of inte\-gers  of a
number
field
$K$
endowed with the inner product
$(x, y):={\rm Trace}_{K/\Q}(x\cdot\theta(y))$,
where $\theta$ is
an involution of $K$.
We classify all  pairs $K$, $\theta$ such that
$\mathscr O$ is similar
to either an even root lattice or
the root lattice $\mathbb Z^{[K:\Q]}$.
We also classify all  pairs
$K$, $\theta$ such that $\mathscr O$
is a root lattice.
In addition to this, we show that $\mathscr O$
is never similar to a
positive-definite
even unimodular lattice of rank $\leqslant 48$,
in particular, $\mathscr O$ is not similar to
the Leech lattice. In appendix,
we give a general cyclicity criterion for
the primary components of the discriminant
group of $\mathscr O$.
\end{abstract}

\maketitle

\section{Introduction}

Number fields  are natural sources of
{\it lat\-ti\-ces}, i.e., pairs $(L, b)$, where $L$
is a free $\mathbb Z$-module
of finite rank and $b\colon L\times L\to \mathbb Z$ is a
nondegenerate sym\-met\-ric bilinear form; see \cite{CP}.
Namely, let $K$ be a number field,
\begin{equation*}
n:=[K: \Q]<\infty,
\end{equation*}
and let ${\mathscr O}$ be  the ring
of integers of $K$.
We fix a field automorphism
\begin{equation}\label{inv}
\theta\in {\rm Aut}\hskip .3mm K\;\;
\mbox{such that $\theta$ is involutive, i.e., $\theta^2={\rm id}$}.
\end{equation}
Then the map
\begin{equation}\label{ti}
{\rm tr}_{K, \theta}\colon K\times K\to
\mathbb Q,\quad {\rm tr}_{K, \theta}(x, y):=
{\rm Trace}_{K/\mathbb Q}^{\ }(x\!\cdot\! \theta(y))
\end{equation}
is a nondegenerate symmetric bilinear form such that
for every nonzero ideal $I$ in ${\mathscr O}  $, the pair
$(I, {\rm tr}_{K, \theta}):=(I, {\rm tr}_{K, \theta}|_{I\times I})$
is a lattice of rank $n$.

This construction admits a natural generalization, see \cite{BF1}, \cite{BF4}, \cite[Chap. 8, \S7]{CS}, and refe\-ren\-ces therein.
Namely, let $J$ be a nonzero (frac\-tional) ideal of $K$, let $a\in K$ be
a nonzero element
such that $\theta(a)=a$,
${\rm Trace}_{K/\mathbb Q}(ax\cdot\theta(y))\in\mathbb Z$ for all $x, y\in J$, and\;let
\begin{equation*}
{\rm tr}_{K, \theta, J, a}\colon J\times J\to
\mathbb Z,\quad {\rm tr}_{K, \theta, J, a}(x, y):=
{\rm Trace}_{K/\mathbb Q}^{\ }(ax\!\cdot\! \theta(y)).
\end{equation*}
Then $(J, {\rm tr}_{K, \theta, J, a})$ is a lattice.\;The origins of this construction essentially go back to Gauss. Indeed,
for $n=2$ and $a=1/{\rm Norm}_{K/\mathbb Q}(J)$,
it turns into classical Gauss' construction, which yields correspondence bet\-ween ideals and binary quadratic forms.

Some remarkable lattices
are isometric to
the lattices
of the form
$(J, {\rm tr}_{K, \theta, J, a})$.
For instance, if $K$ is an $m$th cyclotomic field, then this is so for the root lattices
${\mathbb A}_{p-1}$ (with prime $p$),
${\mathbb E}_6$, and ${\mathbb E}_8$,
where,
respectively, $m=p$, $m=9$, and $m=15$, $20$, $24$. If $m=21$, this is so for the Coxeter--Todd lattice, and if $m=35$, $39$, $52$, $56$, $84$, for the Leach lattice. A classification of root lattices
isometric to the lattices of the form $(J, {\rm tr}_{K, \theta, J, a})$ for cyclotomic $K$ is given in \cite{BM}. References and more examp\-les see in
\cite{BF1}, \cite{BF4},
\cite[Chap. 8, \S7]{CS}. So,  given a lattice $(L, b)$, it arises
the problem of
finding out
whether it is isometric
to $(J, {\rm tr}_{K, \theta, J, a})$ for
suitable  $K$, $\theta$, $J$,\;$a$.

Another problem is to find out whether, for a given  lattice $(L, b)$ and a nonzero ideal $J$ of $K$,
there exist $\theta$ and $a$ such that $(J, {\rm tr}_{K, \theta, J, a})$ is isometric to $(L, b)$.

Among all nonzero ideals, there is a distinguished one, namely,
${\mathscr O}$
itself, for which $a=1$ is a distinguished value suitable for every $\theta$.
This leads to
the problem of finding
remarkable lattices
isometric (or, more generally, similar) to
lattices of
the form $({\mathscr O}  , {\rm tr}_{K, \theta})$.

The present paper is aimed to explore  whether $({\mathscr O}  ,
{\rm tr}_{K, \theta})$ may be similar to a
root lattice, in particular, whether  $({\mathscr O},
{\rm tr}_{K, \theta})$ itself may be a root lattice.
It naturally conjuncts with our previous
publication \cite{PZ}: both papers stem from our wish
to explore realizations in
number fields
of objects
associated with root systems.

There is
a classical construction of
geometric representation of al\-geb\-raic numbers, which embeds $K$  into a Eucli\-dean space, see, e.g., \cite[Chap.\;2, Sect.\;3]{BS}, \cite[6.1.2, 10.3.1]{TVN}.\;One can ask in which cases this em\-bedding
endows $\mathscr O$ with a structure of a lattice $(\mathscr O, b_K)$ isometric to a root lattice.
If the latter holds, then  necessarily $b_K(\mathscr O\times\mathscr O)\subseteq \mathbb Q$.
In Proposi\-tion\;\ref{comp} below is proved that this inclusion is equi\-valent to the existence of an involutive
automorphism $\theta\in {\rm Aut}\,K$ such that $b_K={\rm tr}_{K, \theta}$.\;Therefore,
the construc\-tion
of geometric repre\-sentation
does not provide a new
(in comparison with the one we ex\-p\-lore here)
possibility of naturally endowing $\mathscr O $ with a structure of lat\-tice isometric (or even similar)  to a root one.

\vskip 1.5mm

Before
formulating
our results, we recall some definitions
and facts
(see \cite[Chap. 4]{Ma}, \cite{Bo}, \cite{CS}, \cite{MH}),
and introduce some notation.

\vskip 1.5mm

\begin{asparaitem}

\item[$
\raisebox{.2\height}{\mbox{$\centerdot$}}$]
A nonzero lattice is called a {\it root lattice}
if it is isometric to ortho\-gonal direct sum of lattices
belonging to the
union of
two infinite series  ${\mathbb A}_\ell$\;$(\ell\geqslant 1)$,
${\mathbb D}_\ell$\;$(\ell\geqslant 4)$, and
four sporadic lattices $\mathbb Z^1$, ${\mathbb E}_6$,
${\mathbb E}_7$, ${\mathbb E}_8$,
who\-se explicit description is recalled in Appendix 1
below {\rm(}Section {\rm\ref{ap}}{\rm)}.
All lattices in this union
are indecomposable (i.e., inexpressible as orthogo\-nal
direct sums of nonzero summands).
By Eichler's theorem \cite[Thm. 6.4]{MH} decomposition
of a root lattice as orthogonal direct sum of
inde\-com\-po\-sable lattices (called its
{\it inde\-com\-po\-sable components}) is unique.

\item[$
\raisebox{.2\height}{\mbox{$\centerdot$}}$]
 Given a lattice $(L, b)$, we denote the orthogonal
 direct sum of $s>0$ copies of  $(L, b)$ by $(L, b)^s$.
 For $(L, b)=\mathbb Z^1$, we denote $(L, b)^s$ by $\mathbb Z^s$.

\item[$
\raisebox{.2\height}{\mbox{$\centerdot$}}$]
A characterization of root lattices
is given by fun\-da\-men\-tal Witt's theorem:

\begin{thmnonumber}[{{\rm E. Witt \cite{W};
see also\,\cite[Thm. 4.10.6]{Ma}}}]
A lattice $(L, b)$ is
a root lattice if and only if the following two conditions hold:
\begin{enumerate}[\hskip 2.2mm\rm(i)]
\item the form $b$  is positive-definite;
\item the $\mathbb Z$-module $L$ is generated by
the set $\{x\!\in\! L\mid  \mbox{$b(x, x)\!=\!1$ or $2$}\}$.
\end{enumerate}
\end{thmnonumber}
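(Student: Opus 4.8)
The plan is to prove both implications, dealing quickly with the easy direction and concentrating on the converse. For the forward direction, assuming $(L,b)$ is a root lattice I would simply check (i) and (ii) on each indecomposable summand. Using the explicit descriptions recalled in Appendix~1, each of $\mathbb{Z}^1$, $\mathbb{A}_\ell$, $\mathbb{D}_\ell$, $\mathbb{E}_6$, $\mathbb{E}_7$, $\mathbb{E}_8$ is visibly positive-definite and is, by construction, generated by its roots, which have norm $1$ (the generator of $\mathbb{Z}^1$) or $2$ (in all remaining cases). Since positive-definiteness and (ii) are both stable under orthogonal direct sums, they hold for $(L,b)$.

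For the converse, assume (i) and (ii). Positive-definiteness lets me view $L$ as a lattice in the Euclidean space $V:=L\otimes_{\mathbb{Z}}\mathbb{R}$, and I set $M:=\{x\in L\mid b(x,x)\in\{1,2\}\}$, which generates $L$ by (ii). The first step is an elementary inner-product bound: for $x,y\in M$ the Cauchy--Schwarz inequality gives $b(x,y)^2\le b(x,x)\,b(y,y)\le 4$, so, $b$ being integer-valued, $b(x,y)\in\{-2,-1,0,1,2\}$; the equality $|b(x,y)|=2$ forces $x=\pm y$ with both norms equal to $2$, and $|b(x,y)|=1$ between two norm-$1$ vectors forces $x=\pm y$.

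Next I would split off the norm-$1$ part. Choose a maximal family $e_1,\dots,e_s$ of pairwise non-proportional norm-$1$ vectors; by the previous remark they are orthonormal and generate a sublattice $N_1\cong(\mathbb{Z}^1)^s$, and by maximality every norm-$1$ vector is some $\pm e_j$. I then show that each norm-$2$ vector $\alpha$ is either orthogonal to all $e_i$ or lies in $N_1$: if $b(e_i,\alpha)\ne 0$ then $b(e_i,\alpha)=\pm1$, and after replacing $e_i$ by $\pm e_i$ we may assume it is $+1$, whence $b(\alpha-e_i,\alpha-e_i)=2-2+1=1$, so $\alpha-e_i=\pm e_j$ and $\alpha=e_i\pm e_j\in N_1$. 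Writing $E$ for the span of the $e_i$ and $N_2$ for the sublattice generated by the norm-$2$ vectors orthogonal to $E$, I conclude $L=N_1\perp N_2$: every element of $M$ lies in $N_1$ or in $N_2$, so these generate $L$, while $N_1\cap N_2\subseteq E\cap E^\perp=0$. By construction $N_2$ has no norm-$1$ vector and is generated by its norm-$2$ vectors.

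Finally I identify $N_2$. Put $\Phi:=\{\alpha\in N_2\mid b(\alpha,\alpha)=2\}$. For $\alpha\in\Phi$ the reflection $s_\alpha(v)=v-b(\alpha,v)\alpha$ has integer coefficient $b(\alpha,v)$ for $v\in N_2$, so $s_\alpha$ preserves $N_2$ and $\Phi$; the set $\Phi$ is finite, spans $V_2:=N_2\otimes\mathbb{R}$ (as it generates $N_2$), and by the bound above the only multiples of $\alpha$ in $\Phi$ are $\pm\alpha$. These are precisely the axioms of a reduced root system, here with all roots of length $\sqrt2$, i.e.\ simply-laced. Decomposing $\Phi$ into irreducible components (the connected components of the non-orthogonality graph on $\Phi$) gives an orthogonal decomposition of $V_2$, hence of $N_2$, into the sublattices generated by the components. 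By the Cartan--Killing classification of irreducible reduced root systems, a simply-laced one is of type $A_\ell\,(\ell\ge1)$, $D_\ell\,(\ell\ge4)$, $E_6$, $E_7$, or $E_8$ (types $B_\ell$, $C_\ell$, $F_4$, $G_2$ being excluded, as they have roots of two lengths), and the lattice it generates is by definition the corresponding $\mathbb{A}_\ell$, $\mathbb{D}_\ell$, $\mathbb{E}_6$, $\mathbb{E}_7$, $\mathbb{E}_8$. Thus $L=(\mathbb{Z}^1)^s\perp N_2$ is a root lattice. The main obstacle is the structural work of the converse, namely verifying that $\Phi$ really satisfies the root-system axioms and correctly isolating the norm-$1$ vectors; the closing step rests only on the classical (and citable) classification of root systems, not on any new computation.
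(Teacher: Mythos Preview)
The paper does not give its own proof of this statement; it is quoted as a classical result of Witt, with references to \cite{W} and \cite[Thm.~4.10.6]{Ma}. So there is no ``paper's proof'' to compare against.

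Your argument is correct and is the standard route. The forward direction is immediate from the explicit models in Appendix~1. For the converse, your splitting-off of the norm-$1$ vectors is clean: the key computation $b(\alpha-e_i,\alpha-e_i)=1$ (after adjusting the sign of $e_i$) indeed forces $\alpha=e_i\pm e_j\in N_1$, so every generator lies in $N_1$ or in $N_2$ and the orthogonal decomposition $L=N_1\perp N_2$ follows. The verification that $\Phi=\{\alpha\in N_2: b(\alpha,\alpha)=2\}$ is a reduced crystallographic root system is complete (finiteness, spanning, integrality of $2b(\alpha,\beta)/b(\alpha,\alpha)=b(\alpha,\beta)$, stability under $s_\alpha$, and $\pm\alpha$ the only scalar multiples), and since all roots have the same length the system is simply-laced, so the Cartan--Killing classification leaves only types $A$, $D$, $E$. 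The one place where a word more could be said is the final identification: ``the lattice it generates is by definition the corresponding $\mathbb{A}_\ell$, $\mathbb{D}_\ell$, $\mathbb{E}_n$'' is not literally a definition but rather the (easy, standard) fact that the $\mathbb{Z}$-span of an irreducible root system of type $X$, with roots normalized to length $\sqrt{2}$, is isometric to the lattice $\mathbb{X}$ of Appendix~1; a one-line reference to \cite{Bo} or \cite{Ma} would close this.
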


\item[$
\raisebox{.2\height}{\mbox{$\centerdot$}}$]
The lattices $(L_1, b_1)$ and $(L_2, b_2)$ are called {\it similar}
(equivalently,
one of them
is called {\it similar to}
the other)
if there are nonzero integers $m_1, m_2$ such that
the lattices
$(L_1, m_1b_1)$ and $(L_2, m_2b_2)$ are isometric.

\item[$
\raisebox{.2\height}{\mbox{$\centerdot$}}$]
A nonzero lattice $(L, b)$ is called a {\it primitive lattice}
if the positive integer
\begin{equation}\label{m}
d_{(L, b)}:={\rm gcd}\{b(x, y)\mid x, y\in L\},
\end{equation}
is $1$.
For every nonzero lattice $(L, b)$, the
lattice $(L, b/d_{(L, b)})$ is primitive.
Two  lattices $(L_1, b_1)$ and $(L_2, b_2)$
are similar if and only if
the lattices
$(L_1, b_1/d_{(L_1, b_1)})$ and $(L_2, b_2/d_{(L_2, b_2)})$
are isometric.
A root lat\-tice is non\-primitive if
and only  if
it is isometric to $\mathbb A_1^{a_1}$ for some $a_1$.
\end{asparaitem}

\vskip 1.5mm

We first consider a special case of the problem,
namely, explore whether $({\mathscr O}  ,
{\rm tr}_{K, \theta})$ may be a root lattice.
The following examples show that such cases
do exist.

\begin{example}\label{ex1}
Let $n=1$. Then  we have $K=\mathbb Q$,
${\mathscr O}  =\mathbb Z$, $\theta={\rm id}$, and
${\rm Trace}_{K/\mathbb Q}^{\ }(x) =x$ for every
$x\in K$. Hence in this case $({\mathscr O},
{\rm tr}_{K, \theta})$  is the root lattice $\mathbb Z^1$
(which is similar but not isometric to ${\mathbb A}_1$).
\end{example}

\begin{example}\label{ex2}
Let $n=2$ and let $K$ be a $3$rd cyclotomic
field: $K={\mathbb Q}(\sqrt{-3})$. Let $\theta$
be the complex conjuga\-ti\-on. Then
${\mathscr O}  ={\mathbb Z} + {\mathbb Z}\omega$,
where
$\omega=(1+\sqrt{-3})/2$,  and
\begin{equation}\label{fdt}
\mbox{${\rm Trace}_{K/\mathbb Q}^{\ }(x) \!=x+\theta(x)
=2{\rm Re}(x)$ for every $x\in K$}.
\end{equation}
This shows that $({\mathscr O},
{\rm tr}_{K, \theta})$ is a root lattice
isometric to
${\mathbb A}_2$; see \cite{PZ}.
\end{example}

\begin{example}\label{ex3}
Let $n=2$ and let $K$ be a $4$th cyclotomic field:
$K={\mathbb Q}(\sqrt{-1})$. Let $\theta$ be
the complex conjuga\-ti\-on. Then
${\mathscr O}  ={\mathbb Z} + {\mathbb Z}\sqrt{-1}$,
and formula \eqref{fdt} still holds. This shows that $({\mathscr O},
{\rm tr}_{K, \theta})$ is a root lattice isometric to
$\mathbb A_1^{2}$; see \cite{PZ}.
\end{example}

Our first main result,
Theorem \ref{ridnew} below,
yields the classification of all  pairs $K$, $\theta$
for which $({\mathscr O} ,
{\rm tr}_{K, \theta})$ is a root lattice:

\begin{theorem}
\label{ridnew}
The following properties of a pair $K, \theta$ are equivalent:
\begin{enumerate}[\hskip 4.2mm \rm(a)]
\item $(\mathscr O, {\rm tr}_{K, \theta})$ is a root lattice;
\item $K$, $\theta$ is one of the following pairs:
\begin{enumerate}[\hskip 0mm \rm(i)]
\item[$({\rm b}_1)$] $K=\mathbb Q$, $\theta={\rm id};$
\item[$({\rm b}_2)$] $K=\mathbb Q(\sqrt{-3})$, $\theta$
is the complex conjugation;
\item[$({\rm b}_3)$] $K=\mathbb Q(\sqrt{-1})$, $\theta$
is the complex conjugation.
\end{enumerate}
\end{enumerate}
\end{theorem}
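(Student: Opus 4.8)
The implication (b) $\Rightarrow$ (a) is exactly the content of Examples \ref{ex1}, \ref{ex2}, \ref{ex3}, so the whole force of the statement lies in (a) $\Rightarrow$ (b). The plan is to run Witt's theorem: a root lattice is precisely a positive-definite lattice generated by vectors $x$ with ${\rm tr}_{K,\theta}(x,x)\in\{1,2\}$, so I would extract from these two conditions enough numerical constraints to pin down $(K,\theta)$.

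First I would settle the positive-definiteness condition (i). Writing $q(x):={\rm tr}_{K,\theta}(x,x)={\rm Trace}_{K/\Q}(x\,\theta(x))=\sum_{\sigma}\sigma(x)\,\sigma(\theta(x))$, where $\sigma$ runs over the $n$ embeddings $K\hookrightarrow\C$, I would extend $q$ to $K\otimes_{\Q}\mathbb{R}\cong\prod_{v\mid\infty}K_v$ and compute its signature place by place. The involution $\theta$ permutes the archimedean places; a pair interchanged by $\theta$ contributes a hyperbolic (indefinite) plane, a real place fixed by $\theta$ contributes $x\mapsto x^2$, and a complex place fixed by $\theta$ contributes $z\mapsto z\,\theta(z)$, which is $2|z|^2$ if $\theta$ acts as complex conjugation there and the indefinite $2\,{\rm Re}(z^2)$ otherwise. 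Hence $q$ is positive-definite if and only if $\theta$ fixes every archimedean place and acts as complex conjugation at each complex one, equivalently $\sigma\circ\theta=\overline{\sigma}$ for every $\sigma$. Globally this says either $\theta={\rm id}$ and $K$ is totally real, or $\theta\neq{\rm id}$, the fixed field $K^{\theta}$ is totally real with $[K:K^{\theta}]=2$ and $K$ is totally imaginary, that is, $K$ is a CM field and $\theta$ is its complex conjugation. In both surviving cases the formula simplifies to $q(x)=\sum_{\sigma}|\sigma(x)|^2$.

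With this formula in hand, condition (ii) becomes elementary. For $0\neq x\in\mathscr O$ the arithmetic--geometric mean inequality applied to the nonnegative reals $|\sigma(x)|^2$ gives $q(x)\geq n\,|{\rm Norm}_{K/\Q}(x)|^{2/n}\geq n$, since $|{\rm Norm}_{K/\Q}(x)|\geq 1$, with equality $q(x)=n$ forcing all $|\sigma(x)|=1$, i.e. $x$ a root of unity by Kronecker's theorem. Consequently, if $n\geq 3$ there are no vectors of norm $1$ or $2$ at all, so $\mathscr O$ cannot be generated by them and (a) fails. For $n=1$ we get $K=\Q$, which is case $({\rm b}_1)$. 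For $n=2$ the only vectors with $q(x)\leq 2$ are those of norm exactly $2$, namely the roots of unity in $\mathscr O$; in the real quadratic case (where $\theta={\rm id}$) these are only $\pm1$, whose $\Z$-span is $\Z\subsetneq\mathscr O$, so (ii) fails, while in the imaginary quadratic case the roots of unity generate $\mathscr O$ precisely for $K=\Q(\sqrt{-1})$ and $K=\Q(\sqrt{-3})$, giving cases $({\rm b}_3)$ and $({\rm b}_2)$.

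The main obstacle is the signature computation of the first step: one must carefully track how $\theta$ acts on the archimedean places, keeping account of the interchange of factors, in order to rule out every indefinite contribution, and this is where the CM-versus-totally-real dichotomy is forced. Once the clean formula $q(x)=\sum_{\sigma}|\sigma(x)|^2$ is available, the bound $q(x)\geq n$ does the rest almost mechanically, reducing the whole problem to the finite check in degrees $1$ and $2$.
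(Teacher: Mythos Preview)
Your proof is correct and follows essentially the same route as the paper: positive-definiteness forces the totally-real/CM dichotomy (the paper's Lemma~\ref{TRCM}), and the AM--GM bound $q(x)\geq n$ with equality only at roots of unity (the paper's Lemma~\ref{tnin}) immediately reduces to $n\leq 2$.

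The one place you diverge is the endgame for $n=2$. The paper splits into the primitive case (forcing the lattice to be $\mathbb A_2$) and the nonprimitive case (forcing $\mathbb A_1^2$), and then computes discriminants: $|{\rm discr}\,K/\mathbb Q|=3$ pins down $\mathbb Q(\sqrt{-3})$, while $|{\rm discr}\,K/\mathbb Q|=4$ pins down $\mathbb Q(\sqrt{-1})$. You instead use the equality case of your bound directly: the norm-$2$ vectors are exactly the roots of unity in $\mathscr O$, and among quadratic fields these generate $\mathscr O$ only for $\mathbb Q(\sqrt{-1})$ and $\mathbb Q(\sqrt{-3})$. Your version is a bit cleaner here, since it avoids the discriminant lookup and the primitive/nonprimitive case split, and it makes transparent \emph{why} those two fields appear (they are the only imaginary quadratics with extra roots of unity). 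The paper's discriminant approach, on the other hand, generalizes more readily to the similarity questions treated later, where one cannot read off generators so easily.
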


We then
address
the general problem of classifying all pairs $K$,
$\theta$ such that
the lattice
$(\mathscr O, {\rm tr}_{K, \theta})$
is
similar (but not necessarily isometric) to a root
lattice $(L, b)$. It appears that
such pairs are far from being exhausted by
Examples  \ref{ex1}, \ref{ex2}, and \ref{ex3}.
We obtain their complete classifica\-tions
in both
``unmixed'' cases, namely, when the $\Z$-module
$L$ is gene\-rated
by the set $\{x\in L\mid b(x, x)=1\}$
and when it generated by the set
$\{x\in L\mid b(x, x)=2\}$.
The first case
is precisely the one
in which
$(L, b)$ is isometric to
$\mathbb Z^n$.
The second
is the one
in which every
indecomposable component of
$(L, b)$ is not isometric to
$\mathbb Z^1$; the latter property, in turn,
is equivalent to the evenness of the lattice $(L, b)$.
 Our next two main results,  Theorem \ref{pureodd} and  \ref{simeven}
 below, yield
  these classifications.
 In these theorems, $m$ denotes the unique positive integer
 such that
 \begin{equation}\label{mmmm}
 {\rm Trace}_{K/\Q}(\mathscr O)=m\mathbb Z
 \end{equation}
 (such $m$ exists because ${\rm Trace}_{K/\Q}\colon
 \mathscr O\to \mathbb Z$ is a nonzero additive
 group homomorphism).

\begin{theorem}
\label{pureodd}
The following properties of a pair $K, \theta$ are equivalent:
\begin{enumerate}[\hskip 4.2mm \rm(a)]
\item\label{pureodda} $(\mathscr O, {\rm tr}_{K, \theta})$
is similar to
$\mathbb Z^n$;
\item\label{pureoddb} $(\mathscr O, {\rm tr}_{K, \theta})$
is similar to
$\mathbb A_1^{n}$;
\item\label{pureoddc} $(\mathscr O, {\rm tr}_{K, \theta}/m)$
is isometric to
$\mathbb Z^{n}$;
\item\label{pureoddd} $(\mathscr O, 2\hskip .4mm{\rm tr}_{K, \theta}/m)$
is isometric to
$\mathbb A_1^{n}$;
\item\label{pureodde}
$K$ is a $2^a$th cyclotomic field
for a positive integer $a$,
and $\theta$
is the complex conjugation if $a>1$, and $\theta={\rm id}$ if $a=1$.
\end{enumerate}

If  these properties
hold, then $n=2^{a-1}$ and $m=n$.

In \eqref{pureodde}, let $\zeta_{2^a}\!\in\! K$ be
a $2^a$th primitive root of unity, and let
$x_j\!:=\!\zeta_{2^a}^j$. Then the set of
all  indecomposable components of the root
lattice $(\mathscr O, {\rm tr}_{K, \theta}/m)$
coincides with the set of all its
sublattices
\begin{equation*}
\mathbb Zx_j,\quad 0\leqslant j\leqslant 2^{a-1}-1.
\end{equation*}
For every $j$, the value of ${\rm tr}_{K, \theta}/m$ at
$(x_j, x_j)
$ is $1$.
\end{theorem}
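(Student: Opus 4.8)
The plan is to prove the chain of equivalences by first disposing of the ``formal'' implications and then concentrating on the genuine classification statement, which is the equivalence $(\ref{pureoddc})\Leftrightarrow(\ref{pureodde})$. The implications $(\ref{pureodda})\Leftrightarrow(\ref{pureoddb})$ and $(\ref{pureoddc})\Leftrightarrow(\ref{pureoddd})$ are pure scaling: since the Gram matrix of $\mathbb{A}_1^{n}$ in its standard basis is $2I_n$, the lattice $\mathbb{A}_1^{n}$ is similar to $\mathbb{Z}^{n}$ (take scaling factors $2$ and $1$), so transitivity of similarity gives $(\ref{pureodda})\Leftrightarrow(\ref{pureoddb})$, while $(\mathscr O,2\,{\rm tr}_{K,\theta}/m)\cong\mathbb{A}_1^{n}$ is literally $(\mathscr O,{\rm tr}_{K,\theta}/m)\cong\mathbb{Z}^{n}$ rescaled by $2$, giving $(\ref{pureoddc})\Leftrightarrow(\ref{pureoddd})$. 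For $(\ref{pureodda})\Leftrightarrow(\ref{pureoddc})$ I would invoke the similarity criterion recalled in the introduction: as $\mathbb{Z}^{n}$ is primitive, $(\mathscr O,{\rm tr}_{K,\theta})$ is similar to $\mathbb{Z}^{n}$ iff $(\mathscr O,{\rm tr}_{K,\theta}/d)\cong\mathbb{Z}^{n}$ with $d=d_{(\mathscr O,{\rm tr}_{K,\theta})}$. The key observation is that $d=m$: because $\theta(\mathscr O)=\mathscr O$ and $\mathscr O$ is a ring, the set $\{x\theta(y):x,y\in\mathscr O\}$ equals $\mathscr O$ (take $y=1$), whence $\{{\rm tr}_{K,\theta}(x,y)\}={\rm Trace}_{K/\Q}(\mathscr O)=m\mathbb{Z}$ and $d=m$ by \eqref{m} and \eqref{mmmm}.

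For the easy direction $(\ref{pureodde})\Rightarrow(\ref{pureoddc})$ I would compute directly in the cyclotomic basis. For $a\geqslant 2$ the set $x_j=\zeta_{2^a}^{\,j}$, $0\leqslant j\leqslant 2^{a-1}-1$, is a $\mathbb{Z}$-basis of $\mathscr O=\mathbb{Z}[\zeta_{2^a}]$, and since $\theta$ is complex conjugation, ${\rm tr}_{K,\theta}(x_j,x_k)={\rm Trace}_{K/\Q}(\zeta_{2^a}^{\,j-k})$. The Ramanujan-sum/M\"obius evaluation of traces of roots of unity gives ${\rm Trace}_{K/\Q}(\zeta_{2^a}^{\,l})=2^{a-1}$ if $l\equiv 0$, $\,=-2^{a-1}$ if $l\equiv 2^{a-1}$, and $=0$ otherwise, modulo $2^a$. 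Since $|j-k|<2^{a-1}$, only $j=k$ contributes, so the Gram matrix is $2^{a-1}I_n$; the case $a=1$ ($K=\Q$) is trivial. Hence $m=n=2^{a-1}$ and $(\mathscr O,{\rm tr}_{K,\theta}/m)$ has Gram matrix $I_n$, i.e.\ is isometric to $\mathbb{Z}^{n}$. This same computation delivers the final assertion: the $\mathbb{Z}x_j$ are pairwise orthogonal for ${\rm tr}_{K,\theta}/m$, each has ${\rm tr}_{K,\theta}/m$-value $1$ at $(x_j,x_j)$, so each is a copy of $\mathbb{Z}^1$, and by Eichler's uniqueness these are precisely the indecomposable components.

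The substantial direction is $(\ref{pureoddc})\Rightarrow(\ref{pureodde})$, and this is where I expect the main obstacle to lie. Positive-definiteness of ${\rm tr}_{K,\theta}$ forces, for every embedding $\sigma\colon K\to\C$, the identity $\sigma\circ\theta=\bar\sigma$, so that $K$ is either totally real with $\theta={\rm id}$ or a CM field with $\theta$ its complex conjugation; in both cases ${\rm tr}_{K,\theta}(x,x)=\sum_{\sigma}|\sigma(x)|^2$. By the arithmetic--geometric mean inequality this is $\geqslant n\,|{\rm Norm}_{K/\Q}(x)|^{2/n}\geqslant n$ for nonzero $x\in\mathscr O$, with equality forcing $|{\rm Norm}_{K/\Q}(x)|=1$ and $|\sigma(x)|=1$ for all $\sigma$, i.e.\ $x$ a root of unity by Kronecker's theorem. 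Since ${\rm tr}_{K,\theta}(1,1)=n$ gives $m\mid n$, while the minimal norm of $(\mathscr O,{\rm tr}_{K,\theta}/m)\cong\mathbb{Z}^{n}$ equals $1$, I get $m=n$, and the norm-$m$ vectors are exactly the roots of unity $\mu_w$ of $K$. As $\mathbb{Z}^{n}$ is generated by its norm-$1$ vectors, $\mathscr O=\mathbb{Z}[\mu_w]=\mathbb{Z}[\zeta_w]$, whence $\phi(w)=n=[K:\Q]$ and $K=\Q(\zeta_w)$ is cyclotomic.

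It then remains to pin down $w$, and this is the crux. Writing the traces of the basis $\zeta_w^{\,j}$ as $\tfrac{\phi(w)}{\phi(d)}\mu(d)$ with $d=w/\gcd(w,j)$ ranging over divisors of $w$, the requirement ${\rm Trace}_{K/\Q}(\mathscr O)=m\mathbb{Z}=n\mathbb{Z}=\phi(w)\mathbb{Z}$ forces $\phi(d)\mid\mu(d)$ for every divisor $d$ of $w$; for squarefree $d$ this gives $\phi(d)=1$, hence $d\in\{1,2\}$, so $w$ has no odd prime factor and $w=2^{a}$. The totally real alternative collapses to the same conclusion because a totally real field has only the roots of unity $\pm 1$, forcing $K=\Q=\Q(\zeta_2)$. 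This yields $K=\Q(\zeta_{2^a})$ with $\theta$ as in $(\ref{pureodde})$, completing the cycle; the supplementary equalities $n=2^{a-1}$ and $m=n$ have been obtained along the way. The only external inputs I rely on are the positive-definiteness structure result (the condition $\sigma\theta=\bar\sigma$), Kronecker's theorem, and the elementary trace formula for roots of unity.
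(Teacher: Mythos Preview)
Your proof is correct and follows essentially the same route as the paper: the formal equivalences via scaling and primitivity, the direct cyclotomic Gram-matrix computation for $(\ref{pureodde})\Rightarrow(\ref{pureoddc})$, and for $(\ref{pureoddc})\Rightarrow(\ref{pureodde})$ the reduction via positive-definiteness and AM--GM/Kronecker to $m=n$ and $\mathscr O=\mathbb Z[\mu_K]$, followed by eliminating odd primes from $|\mu_K|$. The only cosmetic difference is in this last step: you use the M\"obius trace formula ${\rm Trace}_{K/\Q}(\zeta_w^{\,j})=\tfrac{\phi(w)}{\phi(d)}\mu(d)$ and the resulting divisibility $\phi(d)\mid\mu(d)$ to exclude all squarefree divisors $d>2$, whereas the paper fixes one odd prime $p\mid|\mu_K|$, takes a primitive $p$th root $x$, and computes ${\rm Trace}_{K/\Q}(-x)=n/(p-1)\in(0,n)$ to contradict $m=n$ directly---the same computation in slightly different packaging.
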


\begin{theorem}
\label{simeven}
The following
properties of a pair $K, \theta$  are equivalent:
\begin{enumerate}[\hskip 4.2mm \rm(a)]
\item\label{simeva} $(\mathscr O, {\rm tr}_{K, \theta})$
is similar to an even primitive root lattice.
\item\label{simevb} $(\mathscr O, {\rm tr}_{K, \theta}/m)$
is an even primitive root lattice.
\item\label{simevc} $n$ is even and
$(\mathscr O, {\rm tr}_{K, \theta}/m)$ is isometric to
$\mathbb A_2^{n/2}$.
\item\label{simevd}
$K$ is a $2^a3^b$th cyclotomic field for some
positive integers $a$ and $b$,
  and $\theta$ is the complex conjugation.
    \end{enumerate}

   If these properties hold, then $n=2^{a}3^{b-1}$ and $m=n/2$.

     In \eqref{simevd}, let $\zeta_{2^a}$ and $\zeta_{3^b}\in K$
     be respectively a primitive
     $2^a$th and $3^b$th root of unity, and let
     $x_{i,j}:=\zeta_{2^a}^i\zeta_{3^b}^j$.
     Then the set of all  indecomposable components of
     the root lattice $(\mathscr O, {\rm tr}_{K, \theta}/m)$
     coincides with the set of all
     its sublattices
     \begin{equation*}
     \mathbb Zx_{i, j}+\mathbb Z x_{i, j+3^{b-1}} \quad
     0\leqslant i\leqslant 2^{a-1}-1,\;\;0\leqslant j\leqslant 3^{b-1}-1.
     \end{equation*}
     For all $i, j$, the values of ${\rm tr}_{K, \theta}/m$ at
   $(x_{i,j}, x_{i,j})$,
    $(x_{i,j+3^{b-1}}, x_{i,j+3^{b-1}})$, and $(x_{i,j}, x_{i,j+3^{b-1}})$
   are
   respectively $2$, $2$, and $-1$.
\end{theorem}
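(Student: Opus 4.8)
The plan is to prove the cycle $(a)\Rightarrow(b)\Rightarrow(d)\Rightarrow(c)\Rightarrow(a)$ together with the explicit assertions at the end, the substantial part being $(b)\Rightarrow(d)$.

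First I would dispose of the easy links. The crucial preliminary observation is that $d_{(\mathscr O,{\rm tr}_{K,\theta})}=m$: every value ${\rm tr}_{K,\theta}(x,y)={\rm Trace}_{K/\Q}(x\theta(y))$ lies in ${\rm Trace}_{K/\Q}(\mathscr O)=m\Z$, while $m$ itself is attained by taking $y=1$ and $x\in\mathscr O$ with ${\rm Trace}_{K/\Q}(x)=m$. Hence the primitivization of $(\mathscr O,{\rm tr}_{K,\theta})$ is exactly $(\mathscr O,{\rm tr}_{K,\theta}/m)$, and the similarity criterion recalled in the introduction gives $(a)\Rightarrow(b)$ at once, since the primitivization of an even primitive root lattice is itself. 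As $\mathbb A_2^{n/2}$ is even and primitive (each summand has Gram matrix $\left(\begin{smallmatrix}2&-1\\-1&2\end{smallmatrix}\right)$, whose off-diagonal entry is $-1$), the implication $(c)\Rightarrow(a)$ is immediate by scaling.

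For $(d)\Rightarrow(c)$ and the explicit description I would compute directly. By the Chinese Remainder Theorem $\mathscr O=\Z[\zeta_{2^a}]\otimes_\Z\Z[\zeta_{3^b}]$, so the listed $x_{i,j}$ and $x_{i,j+3^{b-1}}$ form a $\Z$-basis of $\mathscr O$; and since $\Q(\zeta_{2^a})$ and $\Q(\zeta_{3^b})$ are linearly disjoint, writing $x_{i,k}=\zeta_{2^a}^i\zeta_{3^b}^k$ one has ${\rm tr}_{K,\theta}(x_{i,k},x_{i',k'})={\rm Trace}_{\Q(\zeta_{2^a})/\Q}(\zeta_{2^a}^{\,i-i'})\cdot{\rm Trace}_{\Q(\zeta_{3^b})/\Q}(\zeta_{3^b}^{\,k-k'})$. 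Evaluating these Ramanujan sums by the prime-power formula — ${\rm Trace}_{\Q(\zeta_{p^c})/\Q}(\zeta_{p^c}^{\,s})$ equals $\phi(p^c)$, $-p^{c-1}$, or $0$ according as $v_p(s)\ge c$, $v_p(s)=c-1$, or $v_p(s)<c-1$ — shows that summands with distinct $(i,j)$ are orthogonal (one of the two factors vanishes) and that each prescribed summand has Gram matrix $\left(\begin{smallmatrix}2&-1\\-1&2\end{smallmatrix}\right)$ after dividing by $m=n/2$. This yields $(\mathscr O,{\rm tr}_{K,\theta}/m)\cong\mathbb A_2^{n/2}$, the values $2,2,-1$, and the identification of the indecomposable components, while $\phi(2^a3^b)=2^a3^{b-1}=n$ and $m=n/2$ drop out of the same computation.

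The crux is $(b)\Rightarrow(d)$. Since a root lattice is positive-definite, ${\rm tr}_{K,\theta}$ is positive-definite, and this forces $\sigma\circ\theta=\bar\sigma$ for every embedding $\sigma\colon K\to\C$; hence either $\theta={\rm id}$ with $K$ totally real, or $K$ is CM and $\theta$ is complex conjugation. The decisive step is then an arithmetic–geometric mean estimate: for $0\ne x\in\mathscr O$ we have ${\rm tr}_{K,\theta}(x,x)=\sum_\sigma|\sigma(x)|^2\ge n\,|{\rm Norm}_{K/\Q}(x)|^{2/n}\ge n$, with equality precisely when $|\sigma(x)|=1$ for all $\sigma$, i.e. (Kronecker's theorem) precisely when $x$ is a root of unity. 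Thus the minimal vectors of $(\mathscr O,{\rm tr}_{K,\theta})$ are exactly the roots of unity $\mu_K$; evenness forces the minimal norm of $(\mathscr O,{\rm tr}_{K,\theta}/m)$ to be $2$, whence $n/m=2$, that is $m=n/2$. In the totally real case $\mu_K=\{\pm1\}$ spans only a rank-one sublattice and cannot give an even primitive root lattice of rank $n$, so it drops out; in the CM case Witt's criterion (there are no norm-$1$ vectors) shows the roots of unity generate, so $\mathscr O=\Z[\mu_K]=\Z[\zeta_w]$ and $K=\Q(\zeta_w)$ with $w=|\mu_K|$.

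It remains to identify $w$, and I expect this to be the main technical obstacle. With $m=\phi(w)/2$, each value ${\rm tr}_{K,\theta}(1,\zeta_w^k)/m=2c_w(k)/\phi(w)$, where $c_w(k)={\rm Trace}_{K/\Q}(\zeta_w^k)=\mu(e)\phi(w)/\phi(e)$ and $e=w/\gcd(w,k)$, must be an integer lying in $[-2,2]$; integrality forces $\phi(e)\mid 2$ for every squarefree divisor $e$ of $w$, so the only primes dividing $w$ are $2$ and $3$. Finally the case $w=2^a$ is excluded because it falls under Theorem \ref{pureodd}, where the lattice is the odd lattice $\Z^n$ and $m=n\ne n/2$; hence $3\mid w$ and $w=2^a3^b$, which is $(d)$. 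The real difficulty is to control all the Ramanujan sums $c_w(k)$ simultaneously and to locate exactly the borderline separating the even regime $3\mid w$ from the odd regime $w=2^a$ of Theorem \ref{pureodd}; it is precisely the AM–GM identification of minimal vectors with roots of unity that collapses the whole classification to this finite arithmetic check.
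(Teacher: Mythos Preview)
Your proposal is correct and follows essentially the same route as the paper: the key implication $(b)\Rightarrow(d)$ in both arguments rests on Lemma~\ref{TRCM}/Corollary~\ref{fie} for positive-definiteness, the AM--GM/Kronecker identification of minimal vectors with $\mu_K$ (Lemma~\ref{tnin}), the consequence $m=n/2$, and then a trace computation to force $p=3$ for any odd prime $p\mid|\mu_K|$---your Ramanujan-sum formulation $2/\varphi(e)\in\Z$ is exactly the paper's observation that ${\rm Trace}_{K/\Q}(-x)=n/(p-1)$ must be divisible by $m=n/2$; likewise your $(d)\Rightarrow(c)$ via the tensor splitting and the prime-power trace formula is precisely the content of Lemmas~\ref{2acycl}, \ref{3bcycl}, \ref{tenpro}. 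The only cosmetic differences are your cyclic organization of the implications and the explicit invocation of the Ramanujan-sum closed form, neither of which changes the substance.
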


Note that if $K$ is a $d$th cyclotomic field, and $\theta$ is the complex con\-jugation,
then for $d=2^a$, the similarity of $(\mathscr O, {\rm tr}_{K, \theta})$ to $\mathbb Z^{2^{a-1}}$
was observed in \cite[Prop.\;9.1(ii)]{BF2}, and for $d=2\cdot 3^b$, the similarity of
$(\mathscr O, {\rm tr}_{K, \theta})$ to $\mathbb A_2^{3^{b-1}}$
can be deduced from \cite[Prop.\;9.1(i)]{BF2}.

Since $\mathbb E_8$ is the unique (up to isometry)
positive-definite even uni\-mo\-du\-lar lattice
of rank $8$ (see \cite[\S6]{MH}),
Theorem \ref{simeven} solves in the negative  for $n=8$
the existence problem
of a lattice $({\mathscr O}, {\rm tr}_{K, \theta})$
similar to a positive-definite even unimodular one.
Our last main result, Theorem\;\ref{24} below,
shows that as a matter of fact
 the following more general state\-ment holds:

\begin{theorem}
\label{24}
Every positive-definite even unimodular lattice
of rank $\leqslant 48$
is not similar to a lattice of the form
$(\mathscr O, {\rm tr}_{K, \theta})$.
\end{theorem}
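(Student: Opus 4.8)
The plan is to prove Theorem~\ref{24} by combining the strong structural constraints on $(\mathscr O, {\rm tr}_{K,\theta})$ established in the preceding theorems with the classification of positive-definite even unimodular lattices in low rank. A positive-definite even unimodular lattice has rank divisible by $8$, so it suffices to treat ranks $8$, $16$, $24$, $32$, $40$, $48$. First I would observe that if $(\mathscr O, {\rm tr}_{K,\theta})$ is similar to such a lattice $(L,b)$, then $(L,b)$ is in particular an \emph{even} lattice, hence (being primitive, as unimodularity forces $d_{(L,b)}=1$) it is an even primitive lattice. If we further knew $(L,b)$ were a \emph{root} lattice, Theorem~\ref{simeven} would apply directly and force $K$ to be cyclotomic with $(\mathscr O,{\rm tr}_{K,\theta}/m)$ isometric to $\mathbb A_2^{n/2}$, whose discriminant is $3^{n/2}\neq 1$, contradicting unimodularity. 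So the crux is that a generic even unimodular lattice need not be a root lattice, and the argument must not presuppose Theorem~\ref{simeven}.

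The key structural input I would extract from the earlier material is a \emph{divisibility/discriminant obstruction}. The similarity $(\mathscr O, {\rm tr}_{K,\theta}/m)\cong (L, \lambda b)$ for suitable scaling forces the discriminant group of $(\mathscr O, {\rm tr}_{K,\theta})$ to have a very constrained shape; Appendix~1 (the cyclicity criterion for primary components of the discriminant group of $\mathscr O$) is presumably designed precisely to control this. For $(L,b)$ unimodular the primitive rescaling $(\mathscr O,{\rm tr}_{K,\theta}/m)$ must itself be unimodular, so I would compute the discriminant of $(\mathscr O, {\rm tr}_{K,\theta})$ in terms of the field discriminant $d_K$ and the element defining $\theta$, and show that unimodularity after scaling by $1/m$ pins down $d_K$ up to a power of $2$ times a power of the relevant ramified primes. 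The plan is then to show these arithmetic constraints are incompatible with the existence of an even unimodular lattice in the given rank: either the forced field $K$ has the wrong degree, or the induced form cannot be both even and of determinant $1$.

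Concretely, the main line of attack would be: (1) reduce to $(\mathscr O, {\rm tr}_{K,\theta}/m)$ being an even unimodular lattice of rank $n\in\{8,16,\dots,48\}$; (2) use the formula for the discriminant of $({\mathscr O},{\rm tr}_{K,\theta})$ together with the value of $m$ from \eqref{mmmm} to derive that unimodularity forces a specific congruence/valuation condition on $d_K$ at each prime; (3) invoke the Appendix cyclicity criterion to rule out the discriminant group being trivial unless $K$ is one of finitely many explicitly bounded fields; and (4) check these remaining candidate fields one by one, showing in each case that either the rank is wrong or the resulting lattice fails to be even or fails to be unimodular. The bound $48$ should enter only through the finiteness and size of the candidate list in step~(4).

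The hard part will be step~(2)--(3): controlling the discriminant group of $(\mathscr O, {\rm tr}_{K,\theta})$ well enough to conclude that no even unimodular lattice can arise, without assuming $(L,b)$ is a root lattice. The earlier theorems handle the root-lattice case, but an arbitrary even unimodular lattice (e.g.\ one of the $24$ Niemeier lattices in rank $24$, which need not be generated by norm-$2$ vectors, or the many lattices in rank $32$ and beyond) carries no combinatorial root-system structure to exploit, so the whole argument must run through the arithmetic invariants of $\mathscr O$ and $\theta$ rather than through Witt's theorem. I expect the decisive obstruction to be a parity or $2$-adic discriminant argument: the trace form ${\rm tr}_{K,\theta}$ has a $2$-adic structure (governed by ramification of $2$ in $K$ and by whether $\theta={\rm id}$) that is forced to be incompatible with the simultaneous conditions of evenness and unimodularity, and making this incompatibility precise and uniform across all ranks $\leqslant 48$ is where the real work lies.
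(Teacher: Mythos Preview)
Your proposal is headed in a reasonable general direction---a discriminant obstruction---but it misses the decisive tool and is much more elaborate than what is actually needed. The paper's proof is short and does \emph{not} use the cyclicity criterion, any $2$-adic parity analysis, or any case-by-case enumeration of fields.

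Here is what you are missing. From unimodularity and primitivity you correctly get that $(\mathscr O,{\rm tr}_{K,\theta}/m)$ is isometric to $(L,b)$ and hence unimodular. Combining Lemma~\ref{prp}\eqref{test},\eqref{dgdg} this gives exactly
\[
|{\rm discr}\,K/\mathbb Q|=m^{\,n},\quad\text{i.e.}\quad \sqrt[n]{|{\rm discr}\,K/\mathbb Q|}=m.
\]
From evenness, Lemma~\ref{prp}${\rm(\ref{te}_1)}$ yields $m\mid n/2$, hence $m\le n/2$. So the entire problem reduces to the single inequality
\[
\sqrt[n]{|{\rm discr}\,K/\mathbb Q|}\le n/2.
\]
The paper then dispatches this with the Odlyzko--Diaz\,y\,Diaz lower bounds on root discriminants (Lemmas~\ref{lowerDreal} and~\ref{lowerDCM}): for totally real $K$ with $3\le n\le 75$ one has $\sqrt[n]{{\rm discr}\,K/\mathbb Q}>(n+1)^2/2n>n/2$, and for CM $K$ with $2<n\le 48$ one has $\sqrt[n]{|{\rm discr}\,K/\mathbb Q|}>n/2$ via \eqref{pCM}. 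This is the whole proof; the bound $48$ enters only as the range in which the tabulated root-discriminant lower bounds suffice.

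Your steps (3)--(4) are therefore unnecessary, and the cyclicity criterion of Theorem~\ref{cdg} plays no role here (indeed the paper says explicitly that it was used in an earlier, superseded approach). Your expectation that ``the decisive obstruction [is] a parity or $2$-adic discriminant argument'' is off target: the obstruction is analytic (root-discriminant lower bounds), not local. If you tried to push your plan through as written, step~(3) would not produce a finite candidate list without already invoking something equivalent to the discriminant bounds, and the case check in step~(4) would be redundant once you have them.
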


\begin{corollary}
Every lattice $(\!\mathscr O, {\rm tr}_{K, \theta})$
is not similar to the Leech lattice.
\end{corollary}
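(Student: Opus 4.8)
The plan is to deduce Theorem~\ref{24} directly from the structure theory already assembled, together with the classical fact that the Leech lattice is the unique positive-definite even unimodular lattice of rank $24$ with no vectors of norm $2$ (its minimal norm is $4$). First I would observe that the Leech lattice $\Lambda$ has rank $24$ and is even, positive-definite, and unimodular; by Theorem~\ref{24} applied with $n=24\leqslant 48$, no lattice $(\mathscr O,{\rm tr}_{K,\theta})$ is similar to a positive-definite even unimodular lattice of that rank, and $\Lambda$ is such a lattice. Hence $(\mathscr O,{\rm tr}_{K,\theta})$ is never similar to $\Lambda$. This is essentially a one-line corollary, so the real content is checking that the hypotheses of Theorem~\ref{24} are met by $\Lambda$.

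The only genuine verification is that the Leech lattice indeed falls under the scope of Theorem~\ref{24}: it is positive-definite, even, unimodular, and of rank $24$, with $24\leqslant 48$. All four of these are standard properties recorded in, e.g., \cite[Chap.~4]{CS}. So I would simply cite the rank and the even unimodularity of $\Lambda$, invoke Theorem~\ref{24}, and conclude.

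The main (and indeed only) obstacle is purely expository: one must make sure the Leech lattice is presented as a lattice in the sense fixed at the start of the paper, i.e. that its Gram form takes integer values and is nondegenerate, so that "similar" has the meaning used in Theorem~\ref{24}. Since $\Lambda$ is unimodular its form is integral and nondegenerate, so this is immediate. No separate argument beyond quoting Theorem~\ref{24} is required; the corollary is a direct specialization of the rank-$24$ instance of that theorem.
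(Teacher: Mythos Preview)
Your proposal is correct and matches the paper's approach: the corollary is stated immediately after Theorem~\ref{24} with no separate proof, as it is a direct specialization (the Leech lattice being positive-definite, even, unimodular, of rank $24\leqslant 48$). Two minor cosmetic points: your opening sentence says you will ``deduce Theorem~\ref{24}'' when you mean the corollary, and the uniqueness/minimal-norm facts about the Leech lattice you mention play no role and can be dropped.
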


Theorem \ref{24} excludes
many lattices from being similar to
lattices
of the form $(\!\mathscr O, {\rm tr}_{K, \theta})$:
recall \cite[Chap. 2, \S6]{MH} that if $\Phi(r)$
is the number of pairwise nonisometric
positive-definite even unimodular lattices
of rank $r$, then $\Phi(8)=1$,
$\Phi(16)=2$, $\Phi(24)=24$, $\Phi(32)\geqslant
10^7$, $\Phi(48)\geqslant 10^{51}$.

\vskip 1.5mm

This paper
is organized
as follows.
Theorems \ref{ridnew}, \ref{pureodd}, \ref{simeven},
and \ref{24} are pro\-ved, respectively, in Sections
\ref{rola}, \ref{szn}, \ref{prprl}, and \ref{prpul}.
Section \ref{ge} contains se\-ve\-ral general auxiliary
results
used in these  proofs
and in the proof of Theorem \ref{cdg} (see below).  At the end of this paper,
two short appendices are placed.
Appendix 1 (Section \ref{ap}) recalls the explicit
description of indecomposable root lattices.
Appendix 2 (Section \ref{ap2})
contains a general cyclicity criterion for the
primary components of the discrimi\-nant group
of $(\mathscr O, {\rm tr}_{K, \theta})$
(Theorem \ref{cdg}).
In the first stage of this project,
we
used another approach to finding a classification of
all  pairs $K$, $\theta$ such that
$(\mathscr O, {\rm tr}_{K, \theta})$
is
similar to
an indecomposable even root lattice.
This approach
led us
to only a partial
answer,
see \cite[Thms. 4, 5]{P} (in contrast, in
the present paper we obtain
a complete classification, see Theorems
\ref{pureodd} and \ref{simeven}).
However, within this approach we obtained
and applied a
general cyclicity crite\-rion, which may be
useful for other applications. Therefore, we
consider it worthwhile to publish
it in Appendix 2 as Theorem \ref{cdg}.

\vskip 2mm

{\it Acknowledgements.} We thank E.\;Bayer-Fluckiger for her interest in this work
and help with references, and the referee,  whose
wish for the authors that they
compare
the two lattice constructions resulted in proving
Pro\-position \ref{comp}.

\vskip 2mm
{\it Conventions, terminology, and notation}

\vskip 2mm

Given a lattice $(L, b)$ of rank $r>0$, we canonically
embed $L$ in the vector space $V=
L\otimes _{\mathbb Z}\mathbb Q$ over $\mathbb Q$
and extend $b$ to a nondegenerate symmetric
bilinear form $V\times V\to \mathbb Q$,
still denoted by $b$. If $(L, b)=({\mathscr O}, {\rm tr}_{K, \theta})$,
then $r=n$ and $V$ is naturally identified with $K$.
If $M$ is a submodule of the $\mathbb Z$-module $L$,
then $(M, b|_{M\times M})$ is called a {\it sublattice}
of $(L, b)$ and denoted just by $M$.

For every
$s\in \mathbb Z$, we put
\begin{equation}\label{ccc}
(L, b)[s]:=\{x\in L\mid b(x, x)=s\}.
\end{equation}
Thus $(L, b)$ is an {\it even lattice} if and only if $(L, b)[s]=
\varnothing$ for all odd $s$.

${\rm discr}(L, b)$
is the {\it discriminant} of $(L, b)$, i.e.,
\begin{equation}\label{discr}
{\rm discr}(L, b):=\det (b(e_i, e_j))\in\Z,
\end{equation}
where $e_1,\ldots, e_r$ is a basis of $L$ over
$\mathbb Z$ (the right-hand side of \eqref{discr}
does not depend on the choice of basis).

$L^*$ is the {\it dual of $L$ with respect to $b$}, i.e.,
\begin{equation}\label{duanew}
L^*:=\{x\in V\mid \mbox{$b(x, L)\subseteq
\mathbb Z$}\}
\supseteq L.
\end{equation}
The {\it discriminant group}
of $(L, b)$
is the (finite Abelian) group $L^*/L$.

${\rm discr}\,{K/\mathbb Q}:=
{\rm discr}({\mathscr O}, {\rm tr}_{K, \rm id})$ is
the discriminant of
$K/\mathbb Q$.

${\rm Trace}_{K/\mathbb Q}^{\ } (x)$ and
${\rm Norm}_{K/\mathbb Q}^{\ } (x)$ are
respectively the trace and norm over $\mathbb Q$
of an element $x\in K$.

$\mathfrak c$ and $\mathfrak d$ are respectively
the {\it codifferent} and {\it different} of
$K/\mathbb Q$, i.e.,
\begin{equation}\label{cd}
\begin{gathered}
\mbox{$\mathfrak c$ is the dual of ${\mathscr O}$
with respect to ${\rm tr}_{K, \rm id}$},\\[-1.5mm]
\mathfrak d
:={\mathfrak c}^{-1}
:=
\{x\in K \mid x {\mathfrak c}
\subseteq \mathscr O\}
\end{gathered}
\end{equation}
 ($\mathfrak c$ is a fractional ideal
 of $K$,
 ${\mathscr O}\subset \mathfrak c$, and
 $\mathfrak d$ is an ideal
 of ${\mathscr O}$) \cite{Koch}, \cite{Neukirch}.

 If $\mathfrak a$ and $\mathfrak b$ are nonzero
 ideals of $\mathscr O$ and $\mathfrak a$ is prime,
 then ${\rm ord}_{\mathfrak a}{\mathfrak b}$ is
 the highest nonnegative integer $t$ such that
${\mathfrak a}^t\supseteq \mathfrak b$.

$
K^\theta:=\{x\in K\mid \theta(x)=x\}.
$

$\mu_K$ is the (finite cyclic) multiplicative
group of all roots of unity in\;$K$.

$\varphi$ is Euler's totient function.

$\overline z$ stands for the complex conjugate of $z\in\mathbb C$.

 Given a field $F$, its multiplicative group is denoted by $F^\times$.

\section{Generalities}\label{ge}

In this section, several auxiliary results are collected.
We do not claim priority for all of them and give a
reference always when we know it. At the same time, we
prove all the statements, wanting to make this paper
reasonably  self-contained, and because our proofs,
while being rather short, provide somewhat more information
than can be found in the literature.

 \begin{lemma}
 \label{pthpower}
The following properties of a prime $p$ are equivalent:
\begin{enumerate}[\hskip 2.8mm\rm(i)]
 \item
 ${\rm Trace}_{K/\mathbb Q}^{\ }(x)\in
 p {\mathbb Z}\;\;\mbox{for all  $x\in {\mathscr O}$}$.
 \item
 ${\rm Trace}_{K/\mathbb Q}^{\ }(x^p)\in p{\mathbb Z}\;\;
 \mbox{for all  $x\in {\mathscr O}  $}$.
 \end{enumerate}
  \end{lemma}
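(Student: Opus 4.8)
The plan is to reduce everything modulo $p$ and work inside the finite commutative ring $A:=\mathscr O/p\mathscr O$, viewed as an algebra over $\mathbb{F}_p=\Z/p\Z$; since $\mathscr O\cong\Z^n$ as an abelian group, $A$ is free of rank $n$ over $\mathbb{F}_p$. For $x\in\mathscr O$ write $\bar x\in A$ for its reduction and let $m_{\bar x}\colon A\to A$, $a\mapsto \bar x\,a$, be the $\mathbb{F}_p$-linear operator of multiplication by $\bar x$. The first (routine) step is to record that
\[
{\rm Trace}_{K/\Q}(x)\equiv \tr\big(m_{\bar x}\big)\pmod p ,
\]
which holds because ${\rm Trace}_{K/\Q}(x)$ is the trace of the integer matrix representing multiplication by $x$ in a $\Z$-basis of $\mathscr O$, and reducing that matrix modulo $p$ produces the matrix of $m_{\bar x}$ in the induced $\mathbb{F}_p$-basis of $A$.

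The core of the argument is the \emph{pointwise} congruence
\[
{\rm Trace}_{K/\Q}(x^p)\equiv {\rm Trace}_{K/\Q}(x)\pmod p\qquad\text{for every } x\in\mathscr O .
\]
Once this is proved, the equivalence of (i) and (ii) is immediate: each of (i) and (ii) asserts that the respective side lies in $p\Z$ for all $x$, and the congruence shows the two sides are congruent mod $p$ for every individual $x$. (The implication (i)$\Rightarrow$(ii) is in any case trivial, since $x^p\in\mathscr O$.) To prove the congruence I would use that multiplication is a ring homomorphism, so $m_{\bar x^{\,p}}=(m_{\bar x})^p$, reducing the task to the matrix identity
\[
\tr\big(M^p\big)=(\tr M)^p=\tr M\qquad\text{for all } M\in\mathrm{End}_{\mathbb{F}_p}(A).
\]

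For this identity I would pass to the algebraic closure $\overline{\mathbb{F}_p}$, where $M$ is triangularizable with diagonal entries (the eigenvalues) $\lambda_1,\dots,\lambda_n$; then $M^p$ is triangular with diagonal $\lambda_1^p,\dots,\lambda_n^p$, so $\tr(M^p)=\sum_i\lambda_i^p=\big(\sum_i\lambda_i\big)^p=(\tr M)^p$ by the Frobenius identity in characteristic $p$. Since $\tr M\in\mathbb{F}_p$, Fermat's little theorem gives $(\tr M)^p=\tr M$. Chaining the three displays yields ${\rm Trace}_{K/\Q}(x^p)\equiv \tr((m_{\bar x})^p)=\tr(m_{\bar x})\equiv {\rm Trace}_{K/\Q}(x)\pmod p$.

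The entire argument is essentially formal; the one genuinely arithmetic ingredient is the characteristic-$p$ trace identity $\tr(M^p)=(\tr M)^p$ (which fails over $\Q$), and the single point requiring care is the verification that reduction modulo $p$ commutes with forming the trace of the multiplication operator, so that the left-hand congruence is honestly computed inside $A$. Notably, this route needs no assumption on how $p$ factors in $\mathscr O$: the congruence holds term by term, so (i) and (ii) are in fact equivalent for each $x$ separately, not merely as universally quantified statements.
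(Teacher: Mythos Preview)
Your argument is correct and establishes the same pointwise congruence
\[
{\rm Trace}_{K/\mathbb Q}(x^p)\equiv \big({\rm Trace}_{K/\mathbb Q}(x)\big)^p\equiv {\rm Trace}_{K/\mathbb Q}(x)\pmod p
\]
that the paper proves, but the route is genuinely different. The paper stays in characteristic~$0$: it writes ${\rm Trace}_{K/\mathbb Q}(x)=\sum_i\sigma_i(x)$ via the complex embeddings, observes that $(\sum_i x_i)^p-\sum_i x_i^p=p\cdot f(x_1,\dots,x_n)$ for a symmetric polynomial $f$ with integer coefficients, and then invokes the fundamental theorem of symmetric polynomials to conclude that $f(\sigma_1(x),\dots,\sigma_n(x))\in\mathbb Z$ (since the elementary symmetric functions of the $\sigma_i(x)$ are integers). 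You instead reduce everything modulo $p$ at the outset and replace the symmetric-polynomial step by linear algebra over $\overline{\mathbb F_p}$: triangularize the multiplication operator and use the Frobenius identity $\sum\lambda_i^p=(\sum\lambda_i)^p$ directly. Your approach avoids the appeal to symmetric functions and makes the characteristic-$p$ nature of the statement transparent; the paper's approach avoids passing to an algebraic closure and triangularizing, and yields the congruence ${\rm Trace}(x^p)\equiv({\rm Trace}(x))^p\pmod p$ as an identity in $\mathbb Z$ rather than in $\mathbb F_p$. Both are short and self-contained.
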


 \begin{proof} (i)$\Rightarrow$(ii) is clear. We shall prove
 the inclusion
 \begin{equation}\label{cong}
 {\rm Trace}_{K/\mathbb Q}^{\ }(x^p)- ({\rm Trace}_{K/\mathbb Q} (x))^p
 \in p {\mathbb Z}
 \;\; \mbox{for every $x\in {\mathscr O}$},
 \end{equation}
 which readily implies (ii)$\Rightarrow$(i).
 The set of all field embeddings
  $K \hookrightarrow {\mathbb C}$ contains exactly
  $n$ elements $\sigma_1,\ldots, \sigma_n$, and for
  every $x\in K$, we have
\begin{equation}\label{sum}
{\rm Trace}_{K/\mathbb Q}^{\ } (x)=
x_1+\cdots +x_n,\;\;\mbox{where $x_i:=\sigma_i(x)$}
\end{equation}
(see, e.g., \cite[Chap.\;12]{IR}).
From \eqref{sum} we deduce the existence of a
sym\-met\-ric polynomial $f=f(t_1,\ldots, t_n)$ in
variables $t_1,\ldots, t_n$ with integer coefficients such that
   \begin{equation}\label{xpx}
   \begin{split}
   {\rm Trace}_{K/\mathbb Q}^{\ }(x^p)\!-\!
   ({\rm Trace}_{K/\mathbb Q} (x))^p  &=
   (x_1^p\!+\!\cdots\!+\!x_n^p)\! -\!
   (
   x_1\!+\!\cdots \!+\! x_n
   )^p\\
   & =
  p \cdot f(x_1, \dots, x_n).
  \end{split}
  \end{equation}

  Let  $s_i=s_i(t_1,  \dots, t_n)$ be the elementary
  symmetric polynomial
  in $t_1,\ldots, t_n$ of degree $i$. Then
  $f$ may be represented a polynomial with integer
  coefficients in $s_1, \ldots, s_n$
   (see, e.g., \cite[Chap.\,V, Thm. 11]{Lang}).
   Since, for $x\in \mathscr O$,   we have $s_i(x_1,
   \dots, x_n)\in \mathbb Z$ for all $i$, this implies
   that $f(x_1,  \dots, x_n)\in \mathbb Z$. The latter
   inclusion and  \eqref{xpx} yield \eqref{cong}.
 \end{proof}

\begin{corollary}
\label{evenT}
The lattice $({\mathscr O}  , {\rm tr}_{K, \rm id})$ is even if
and only if the integer
$m$
is even.
\end{corollary}

\begin{proof}
This follows from \eqref{mmmm} and
Lemma \ref{pthpower} with $p=2$.
\end{proof}

\begin{lemma}\label{prp} Let $(L, b)$ be a
nonzero lattice of rank $r$.

\begin{enumerate}[\hskip 2.2mm \rm(i)]
\item\label{sss} If $e_1,\ldots, e_r$ is a basis
of the $\Z$-module $L$
and
$s_1,\ldots, s_r$ are the in\-va\-riant factors  of
the matrix
$(\hskip -.2mm b(e_i,\hskip -.3mm e_j)\!)$ {\rm(}see
{\rm \cite[(16.6)]{CR}}{\rm)}, then the group
$$\textstyle\bigoplus_{i=1}^r \mathbb Z/s_i{\mathbb Z}$$
is isomorphic to the discriminant group of $(L, b)$.
    In par\-ti\-cu\-lar, $
s_1\cdots s_r=|{\rm discr}(L, b)|$ is the order of the latter group.

\item\label{test}
${\rm discr}(L, b)=d_{(L, b)}^{\,r} {\rm discr}(L, b/d_{(L, b)})$.

\item\label{m_d} If $(L, b)\!=\!({\mathscr O},
 {{\rm tr}}_{\theta})$, then
 \begin{enumerate}[\hskip 0mm \rm(i)]
 \item[$\rm (\ref{m_d}_1)$]\label{m_d1}
 $d_{(L, b)}\!=\!m$ {\rm(}in particular, $d_{(L, b)}$
 is independ of $\theta${\rm)};
\item[$\rm (\ref{m_d}_2)$]\label{ttilde}
$n\equiv 0\;{\rm mod}\,m$.
\end{enumerate}
\item\label{dgdg}
The discriminant group of
$(\mathscr O, {\rm tr}_{\theta})$ is iso\-mor\-phic
to\;${\mathscr O}/\mathfrak d$
{\rm(}in par\-ti\-cular, up to isomorphism,
 this group is independent of
 $\theta${\rm)}. Its order is equal to
 $|{\rm disc}\,K/\mathbb Q|$ {\rm(}cf.\;{\rm\cite[Cor.\;2.4]{BF2})}.

\item\label{te} Let
$(L, b/d_{(L, b)})$ be an even lattice.\;Then
$(L,  b)$ is also an even lattice, and if, moreover,
$(L, b)=({\mathscr O}, {{\rm tr}}_{\theta})$,
then
    \begin{enumerate}[\hskip 0mm \rm(a)]
    \item[${\rm (\ref{te}_1)}$]\label{81}
    $n\equiv 0\;{\rm mod}\, 2$ and $n/2\equiv 0\;{\rm mod}\,m$;
\item[${\rm (\ref{te}_2)}$]\label{82}
$n\equiv 0\;{\rm mod}\, 4$ for $\theta={\rm id}$.
\end{enumerate}
\end{enumerate}
\end{lemma}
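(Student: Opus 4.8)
\textbf{Proof plan for Lemma \ref{prp}.}

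The plan is to prove the five parts largely independently, since they concern different structural features of the lattice, and to use parts (\ref{sss})--(\ref{dgdg}) as inputs for part (\ref{te}).

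For part (\ref{sss}), the idea is standard Smith normal form: writing $G=(b(e_i,e_j))$ for the Gram matrix, I would choose unimodular integer matrices $U,V$ with $UGV=\mathrm{diag}(s_1,\dots,s_r)$. The dual lattice $L^*$ is $G^{-1}L$ in coordinates, so $L^*/L$ is the cokernel of the map $\mathbb{Z}^r\to\mathbb{Z}^r$ given by $G$; by the structure theorem this cokernel is $\bigoplus_{i=1}^r\mathbb{Z}/s_i\mathbb{Z}$. The order is $|\det G|=|{\rm discr}(L,b)|=s_1\cdots s_r$, which is immediate. For part (\ref{test}), I would simply observe that scaling $b$ by $1/d_{(L,b)}$ scales each Gram matrix entry by that factor, so the determinant of an $r\times r$ matrix scales by $d_{(L,b)}^{\,r}$; rearranging gives the stated identity.

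For part (\ref{m_d}), the value $d_{(L,b)}$ is by definition the gcd of all $b(x,y)=\mathrm{Trace}_{K/\mathbb{Q}}(x\cdot\theta(y))$ over $x,y\in\mathscr O$. The crux is to show this gcd equals $m$, i.e.\ the generator of $\mathrm{Trace}_{K/\mathbb{Q}}(\mathscr O)$. One inclusion is easy: taking $y=1$ gives $\theta(y)=1$, so every $\mathrm{Trace}_{K/\mathbb{Q}}(x)\in m\mathbb{Z}$ appears among the $b(x,y)$, forcing $d_{(L,b)}\mid m$; and conversely since $x\cdot\theta(y)\in\mathscr O$ for all $x,y\in\mathscr O$, every $b(x,y)$ lies in $\mathrm{Trace}_{K/\mathbb{Q}}(\mathscr O)=m\mathbb{Z}$, giving $m\mid d_{(L,b)}$. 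This simultaneously shows independence of $\theta$. For $\rm(\ref{m_d}_2)$, I would use that $1\in\mathscr O$ and $\mathrm{Trace}_{K/\mathbb{Q}}(1)=n$, so $n\in m\mathbb{Z}$, i.e.\ $n\equiv 0\bmod m$. For part (\ref{dgdg}), the isomorphism $L^*/L\cong\mathscr O/\mathfrak d$ follows from identifying the dual of $\mathscr O$ under ${\rm tr}_{K,\theta}$: since $\theta$ is a ring automorphism it permutes $\mathscr O$ and its fractional ideals isometrically, so the dual is $\theta^{-1}(\mathfrak c)=\mathfrak c=\mathfrak d^{-1}$, whence $L^*/L\cong\mathfrak c/\mathscr O\cong\mathscr O/\mathfrak d$ (the last by multiplying by $\mathfrak d$); the order is $[\mathscr O:\mathfrak d]=|{\rm discr}\,K/\mathbb{Q}|$, which recovers independence of $\theta$.

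The main obstacle is part (\ref{te}). The first assertion---that evenness of $(L,b/d_{(L,b)})$ forces evenness of $(L,b)$---is quick: if $b(x,x)/d_{(L,b)}$ is always even then $b(x,x)$ is divisible by $2d_{(L,b)}$ hence even. The delicate arithmetic is in $\rm(\ref{te}_1)$ and $\rm(\ref{te}_2)$. For $\rm(\ref{te}_1)$, evenness of $(\mathscr O,{\rm tr}_\theta/m)$ together with part (\ref{m_d}) means $\mathrm{Trace}_{K/\mathbb{Q}}(x\cdot\theta(x))/m$ is even for all $x$; specializing appropriately and combining with $n\equiv 0\bmod m$ from $\rm(\ref{m_d}_2)$ should yield both $2\mid n$ and $m\mid n/2$. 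I expect the right specialization to be $x=1$, giving $n/m$ even, i.e.\ $n/2\equiv 0\bmod m$, with $2\mid n$ following since $m\mid n$ and $n/m$ is even. For $\rm(\ref{te}_2)$, when $\theta=\mathrm{id}$, Corollary \ref{evenT} tells us $m$ is even; combined with $\rm(\ref{te}_1)$ giving $m\mid n/2$, we get $4\mid 2m\mid n$, i.e.\ $n\equiv 0\bmod 4$. The subtle point I would double-check is whether $\theta=\mathrm{id}$ is genuinely needed (it is, because only then does Corollary \ref{evenT} apply to force $m$ even), and whether the interplay between the diagonal values $b(x,x)$ and the parity of $m$ is correctly captured by Lemma \ref{pthpower}.
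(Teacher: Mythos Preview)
Your proposal is correct and follows essentially the same approach as the paper's proof: Smith normal form for (\ref{sss}), determinant scaling for (\ref{test}), the observation $\{x\cdot\theta(y):x,y\in\mathscr O\}=\mathscr O$ together with ${\rm tr}_{K,\theta}(1,1)=n$ for (\ref{m_d}), identification of the dual as the codifferent $\mathfrak c$ for (\ref{dgdg}), and the specialization $x=1$ plus Corollary~\ref{evenT} for (\ref{te}). One small cleanup in (\ref{dgdg}): you do not need to pass through $\theta^{-1}(\mathfrak c)$---since $\theta(\mathscr O)=\mathscr O$, the condition ${\rm Trace}_{K/\mathbb Q}(x\cdot\theta(y))\in\mathbb Z$ for all $y\in\mathscr O$ is directly equivalent to ${\rm Trace}_{K/\mathbb Q}(x\cdot z)\in\mathbb Z$ for all $z\in\mathscr O$, so the dual is $\mathfrak c$ on the nose, exactly as the paper argues.
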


\begin{proof}
\eqref{sss} Let $e^{*}_1,\ldots, e_{r}^{*}$ be
the basis of $V$ dual to $e_1,\ldots, e_r$ with respect to $b$, i.e.,
$b(e_i, e_j^{*})=\delta_{ij}$. In view of \eqref{duanew},
since $L=\mathbb Ze_1\oplus\cdots\oplus \mathbb Ze_n$,
we have $L^*=\mathbb Ze_1^*\oplus\cdots\oplus \mathbb Ze_n^*$.
On the other hand,
$e_i\!=\!\sum_{j=1}^r b(e_i, e_j)e_j^*$, i.e.,
$(\hskip -.2mm b(e_i,\hskip -.3mm e_j)\!)$ is  the
chan\-ge-of-basis matrix for
passing from $e_1^*,\ldots , e_n^*$
to $e_1,\ldots , e_n$. Since
 the Smith normal form of this matrix
 is ${\rm diag}(s_1,\ldots, s_r\!)$, this implies the claim.

\eqref{test} This follows from \eqref{m} and \eqref{discr}.

$\rm (\ref{m_d}_1)$ Since $\theta(\mathscr O)=\mathscr O$,
and $1\in\mathscr O$,
we have $\{x\cdot \theta(y)\mid x, y\in \mathscr O\}=
\mathscr O$. Combining this with
\eqref{ti}, \eqref{m}, \eqref{mmmm}, we obtain
the desired equality.

$\rm (\ref{m_d}_2)$ This follows from $\rm (\ref{m_d}_1)$ and \eqref{m} because
\begin{equation}\label{trn}
{\rm tr}_{K, \theta}(1, 1)={\rm Trace}_{K/\mathcal Q}(1)=n.
\end{equation}

\eqref{dgdg}
It follows from $\theta(\mathscr O)=\mathscr O$
and formulas
\eqref{ti}, \eqref{duanew}, \eqref{cd}
that $\mathfrak c$ is the dual of
${\mathscr O}$
with respect to ${{\rm tr}_{K, \theta}}$.
Therefore, the discriminant group of
$(\mathscr O, {\rm tr}_{K, \theta})$ is
$\mathfrak c/\mathscr O$. By \cite[(4.15), p.\;85]{CR},
the latter group
is isomorphic to $\mathfrak c\mathfrak d/\mathscr O\mathfrak d$.
Since $\mathfrak c\mathfrak d=\mathscr O$ and
$\mathscr O\mathfrak d=\mathfrak d$, this proves
the first claim.

By \eqref{sss},  the order of the discriminant group of
$(\mathscr O, {\rm tr}_{K, {\rm id}})$ is $|{\rm disc}\,K/\Q|$.
Since, by the first claim, this group is isomorphic to the
discriminant group of $(\mathscr O, {\rm tr}_{K, \theta})$,
this proves the second claim.

\eqref{te} The first claim follows from \eqref{m}.
Let $(L, b)=(\mathscr O, {\rm tr}_\theta)$.
By $\rm (\ref{m_d}_1)$, the lattice
$(\mathscr O, {\rm tr}_\theta/m)$ is even.
By \eqref{trn} this implies that $n/m$ is an even
integer; whence ${\rm (\ref{te}_1)}$.
If, moreover, $\theta={\rm id}$, then
$m$ is even by Corollary \ref{evenT}; whence
${\rm (\ref{te}_2)}$ because
of the second formula
in ${\rm (\ref{te}_1)}$.
\end{proof}

\begin{remark} By Corollary \ref{evenT} and
Lemma \ref{prp}$\rm (\ref{m_d}_1)$, if
an even lattice $(L, b)$ is isometric to a lattice of the form
$({\mathscr O}  , {\rm tr}_{K, \rm id})$,
then necessarily $d_{(L, b)}$ is even; in
particular, $(L, b)$ is not primitive.
For instance, this shows that $({\mathscr O} , {\rm tr}_{K, \rm id})$
cannot be a primitive even root lattice.
\end{remark}

For any root lattice $(L, b)$, the form $b$ is positive-definite.\;Hence
if $(L, b)$ and $({\mathscr O}, {\rm tr}_{K, \theta})$
are similar, then ${\rm tr}_{K, \theta}$
is  a definite form.
The fol\-lowing Lemma \ref{TRCM} describes when the latter happens.

\begin{lemma}\label{TRCM}
The following properties of a pair $K, \theta$
are equivalent:
\begin{enumerate}[\hskip 3.0mm\rm(i)]
\item  ${\rm tr}_{K, \theta}$
is a definite bilinear form;
\item ${\rm tr}_{K, \theta}$
is a positive-definite bilinear form;
\item\label{co} either $K$ is a totally real field and
$\theta={\rm id}$, or $K$ is a CM-field and $\theta$
is the complex conjugation.
\end{enumerate}
\end{lemma}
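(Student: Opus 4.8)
The plan is to diagonalize the form over $\mathbb{R}$ using the $n$ embeddings $\sigma_1,\dots,\sigma_n\colon K\hookrightarrow\C$ from \eqref{sum}. Since $\theta$ is an automorphism, $\sigma\mapsto\sigma\circ\theta$ permutes the $\sigma_i$; writing $\sigma_i\circ\theta=\sigma_{\tau(i)}$ defines a permutation $\tau$ of $\{1,\dots,n\}$ with $\tau^2=\mathrm{id}$, and for $x\in K$
\[
{\rm tr}_{K,\theta}(x,x)={\rm Trace}_{K/\Q}(x\cdot\theta(x))=\sum_{i=1}^n\sigma_i(x)\,\sigma_{\tau(i)}(x).
\]
First I would record two facts. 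The permutation $\tau$ has a fixed point if and only if $\theta=\mathrm{id}$, because $\sigma_i\circ\theta=\sigma_i$ forces $\theta=\mathrm{id}$ by injectivity of $\sigma_i$. And the complex-conjugation permutation $c$, defined by $\overline{\sigma_i}=\sigma_{c(i)}$, commutes with $\tau$, since both $c\tau$ and $\tau c$ send $\sigma_i$ to $\overline{\sigma_i\circ\theta}$.

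Next I would pass to $K_{\mathbb R}:=K\otimes_{\Q}\mathbb{R}$, realized as the subspace of $\C^n$ with coordinates $z_i=\sigma_i$ cut out by $z_{c(i)}=\overline{z_i}$, on which the extended form is $Q(z)=\sum_i z_i z_{\tau(i)}$. Because $\tau$ and $c$ both preserve the orbits of $\langle\tau,c\rangle$ acting on $\{1,\dots,n\}$, the real space splits as an orthogonal direct sum of blocks indexed by these orbits, so it suffices to compute the signature of each block. When $\theta\neq\mathrm{id}$ the involution $\tau$ is fixed-point-free, so each orbit has one of three shapes: type (A), a $c$-fixed (real) pair $\{i,\tau(i)\}$, on which $Q=2z_iz_{\tau(i)}$ with $z_i,z_{\tau(i)}\in\mathbb{R}$, of signature $(1,1)$; type (B), a $c$-pair that equals its $\tau$-pair, $\tau(i)=c(i)$, on which $Q=2|z_i|^2$, of signature $(2,0)$; type (C), a four-element orbit $\{i,\tau(i),c(i),\tau c(i)\}$, on which $Q$ reduces to $4\,\mathrm{Re}(z_iz_{\tau(i)})$, of signature $(2,2)$. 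When $\theta=\mathrm{id}$ the blocks are the real embeddings, of signature $(1,0)$, and the complex-conjugate pairs, of signature $(1,1)$.

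Finally I would read off the statement. In every case the positive part of the signature is nonzero, so the form is never negative-definite; hence ``definite'' coincides with ``positive-definite'', which gives (i)$\Leftrightarrow$(ii). The form is positive-definite exactly when no indefinite block occurs: for $\theta=\mathrm{id}$ this means there are no complex embeddings, i.e.\ $K$ is totally real; for $\theta\neq\mathrm{id}$ it means every orbit is of type (B), i.e.\ every $\sigma_i$ is complex and $\sigma_i\circ\theta=\overline{\sigma_i}$. I would then check that the latter condition is precisely the assertion that $K$ is a CM-field and $\theta$ its complex conjugation: all embeddings being complex says $K$ is totally imaginary, while $\sigma_i\circ\theta=\overline{\sigma_i}$ forces the fixed field $K^\theta$ (of index $2$ in $K$) to be totally real. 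Conversely, in the two listed cases ${\rm tr}_{K,\theta}(x,x)$ equals $\sum_i\sigma_i(x)^2$, respectively $\sum_i|\sigma_i(x)|^2$, which is manifestly positive for $x\neq0$, giving (iii)$\Rightarrow$(ii). The main obstacle will be the bookkeeping in the orbit analysis—enumerating the three orbit shapes for the two commuting involutions and evaluating each block's signature—together with the final translation of ``all orbits of type (B)'' into the standard definition of a CM-field, in particular the verification that $K^\theta$ is totally real.
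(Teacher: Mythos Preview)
Your argument is correct. Both you and the paper prove the lemma by computing the signature of ${\rm tr}_{K,\theta}$ over $\mathbb R$, but you arrive there by different routes. The paper quotes the signature formula \eqref{sign} from \cite[Prop.\;2.2]{BF1}, parametrized by the real/complex embeddings $r_1,r_2$ of the fixed field $K^\theta$ and (when $\theta\neq{\rm id}$) the sign data of a generator $a$ with $K=K^\theta(\sqrt a)$; the equivalence with (iii) is then read off from that formula. You instead derive the signature from scratch via the orbit decomposition of $\{1,\dots,n\}$ under the two commuting involutions $\tau$ (coming from $\theta$) and $c$ (complex conjugation), computing each block explicitly. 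Your approach is more self-contained and makes the geometry of the diagonalization transparent; the paper's is shorter but relies on an external reference. One incidental simplification: for (i)$\Leftrightarrow$(ii) the paper just observes ${\rm tr}_{K,\theta}(1,1)=n>0$ (see \eqref{trn}), which is quicker than your block-by-block positivity check, though both are valid.
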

\begin{proof}
By \eqref{trn}, ${\rm tr}_{K, \theta}(1,1)>0$;
whence (i)$\Leftrightarrow$(ii).

Let $r_1$ (respectively $r_2$) be the number of real (respectively
pairs of imaginary) field embeddings
$K^\theta\hookrightarrow \mathbb C$. If
$\theta\neq {\rm id}$, then $[K: K^\theta]=2$ and
$K=K^\theta(\sqrt{a})$ for
$a\in K^\theta$; let $s$ be the number of field
embeddings $\iota\colon K^\theta\hookrightarrow
\mathbb R$ such that $\iota(a)<0$. The signature of
$({\mathscr O}, {\rm tr}_{K, \theta})$ is then given by
\begin{equation}\label{sign}
{\rm sign}({\mathscr O}, {\rm tr}_{K, \theta})=
\begin{cases}(r_1+r_2, r_2)&  \mbox{if $\theta={\rm id}$},\\
(r_1+2r_2+s, r_1+2r_2-s)&  \mbox{if $\theta\neq {\rm id}$};
\end{cases}
\end{equation}
 see \cite[Prop.\;2.2]{BF1}. From \eqref{sign} one
 readily deduces
 (ii)$\Leftrightarrow$(iii).
\end{proof}

Note that
\begin{equation}\label{pCM}
\begin{split}
&\mbox{if $K$ is a CM-field and $\theta$ is the complex}\\[-1mm]
&\mbox{conjugation, then $n$ is even, $[K^\theta:\mathbb Q]=n/2$,}\\[-1mm]
&\mbox{and $({\rm discr}\,{K^\theta/\mathbb Q})^2$
divides ${\rm discr}\,{K/\mathbb Q}$;}
\end{split}
\end{equation}
see, e.g., \cite[Chap.\;III, Cor.\;(2.10)]{Neukirch}.

\begin{corollary}\label{fie}
For a pair $K,\theta$, if the lattice $({\mathscr O}, {\rm tr}_{K, \theta})$
is similar to a root lattice, then either $K$ is a totally real field
and $\theta={\rm id}$, or $K$ is a CM-field and $\theta$
is the complex conjugation.
\end{corollary}

\begin{lemma}[{{\rm cf.\;\cite[Lem.\;2]{BF0}}}]
\label{CB}
Let $\sigma_1,\ldots, \sigma_n$ be the set of all field
embed\-dings $K \hookrightarrow \C$. Let
$x \in {\mathscr O}$ be an element such that $\sigma_i(x)$
is a positive real number for every\;$i$. Then the
following hold:
\begin{enumerate}[\hskip 2.2mm\rm(i)]
\item\label{1CB} ${\rm Trace}_{K/\mathbb Q}(x)$
is a positive integer and
\begin{equation}\label{intn}
{\rm Trace}_{K/\mathbb Q}(x)\geqslant n;
\end{equation}
 \item\label{2CB} the equality in {\rm \eqref{intn}}
 holds if and only if $x=1$.
 \end{enumerate}
\end{lemma}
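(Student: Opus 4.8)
The plan is to use the multiplicative structure of the embeddings together with the elementary arithmetic--geometric mean inequality. First I would record the two standard facts about the embeddings $\sigma_1,\dots,\sigma_n$: by \eqref{sum} we have ${\rm Trace}_{K/\mathbb Q}(x)=\sum_{i=1}^n\sigma_i(x)$, and moreover the product $\prod_{i=1}^n\sigma_i(x)={\rm Norm}_{K/\mathbb Q}(x)$ is a rational integer, since $x\in\mathscr O$. Under the hypothesis that every $\sigma_i(x)$ is a positive real number, the norm is therefore a \emph{positive} integer, hence ${\rm Norm}_{K/\mathbb Q}(x)\geqslant 1$.

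For part \eqref{1CB}, the trace ${\rm Trace}_{K/\mathbb Q}(x)$ is a sum of $n$ positive reals, so it is a positive real; being also a rational integer (as $x\in\mathscr O$), it is a positive integer. To get the bound \eqref{intn} I would apply the AM--GM inequality to the positive reals $\sigma_1(x),\dots,\sigma_n(x)$:
\begin{equation*}
\frac{1}{n}\sum_{i=1}^n\sigma_i(x)\;\geqslant\;\Bigl(\prod_{i=1}^n\sigma_i(x)\Bigr)^{1/n}
=\bigl({\rm Norm}_{K/\mathbb Q}(x)\bigr)^{1/n}\;\geqslant\;1,
\end{equation*}
where the last inequality uses that the norm is a positive integer, hence at least $1$. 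Multiplying through by $n$ gives ${\rm Trace}_{K/\mathbb Q}(x)\geqslant n$.

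For part \eqref{2CB}, the ``if'' direction is immediate from \eqref{trn}, since ${\rm Trace}_{K/\mathbb Q}(1)=n$. For the ``only if'' direction I would trace through the equality case in the chain above. Equality in \eqref{intn} forces equality in AM--GM, which holds precisely when all the $\sigma_i(x)$ coincide, say $\sigma_i(x)=c$ for a common positive real $c$; it also forces ${\rm Norm}_{K/\mathbb Q}(x)=c^n=1$, whence $c=1$. Thus $\sigma_i(x)=1=\sigma_i(1)$ for every $i$. Since the embeddings separate points (the $\sigma_i$ are distinct and $x\mapsto(\sigma_1(x),\dots,\sigma_n(x))$ is injective on $K$, as $\sigma_1$ alone already is), this yields $x=1$. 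The main point requiring care is exactly this equality analysis: one must use both that AM--GM is strict unless the entries are equal and that the integrality of the norm pins the common value down to $1$; neither condition alone suffices to conclude $x=1$.
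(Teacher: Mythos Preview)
Your proposal is correct and follows essentially the same route as the paper: both arguments compute the trace and norm via the embeddings, invoke the AM--GM inequality together with the fact that ${\rm Norm}_{K/\mathbb Q}(x)$ is a positive integer to obtain \eqref{intn}, and then analyse the equality case by noting that equal $\sigma_i(x)$ with product $1$ forces $x=1$. The only cosmetic difference is that the paper phrases the equality step as ``all $\sigma_i(x)$ equal $\Rightarrow$ $x\in\mathbb Z_{>0}$ $\Rightarrow$ $x^n=1$'', whereas you conclude directly from $\sigma_1(x)=1$ and injectivity of $\sigma_1$.
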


\begin{proof} The equalities
\begin{equation}\label{sp}
\textstyle {\rm Trace}_{K/\mathbb Q}(x)=\sum_{i=1}^n\sigma_i(x),\quad {\rm Norm}_{K/\mathbb Q}(x)=\prod_{i=1}^n\sigma_i(x)
\end{equation}
 (see, e.g., \cite[Chap.\,12]{IR}) imply that
 $\textstyle {\rm Trace}_{K/\mathbb Q}(x)$ and
 ${\rm Norm}_{K/\mathbb Q}(x)$ are positive integers.
 Combining \eqref{sp} with
  the classic inequality of arith\-me\-tic and geometric
  means (see, e.g., \cite[Sect.\,2]{Ste}), we then obtain
  the in\-equ\-alities
  \begin{equation}\label{CBtn}
  \textstyle {\rm Trace}_{K/\mathbb Q}(x)\geqslant n \cdot{\rm Norm}_{K/\mathbb Q}(x)\geqslant n \cdot 1=n.
  \end{equation}
This proves (i).

 If $x=1$, then the equality in \eqref{intn} holds by \eqref{trn}.
 Conversely, assume that the equality in \eqref{intn}
 holds for some $x$.
 In view of \eqref{CBtn}
 we then con\-c\-lude that
 \begin{equation}\label{n=1}
 {\rm Norm}_{K/\mathbb Q}(x)=1
 \end{equation}
 and the equalities in \eqref{CBtn} hold too.
 The classic inequality of arith\-me\-tic and geometric
 means tells us that the latter happens
 if and only if
 $\sigma_1(x)=\ldots=\sigma_n(x)$, i.e., $x$ is
 a positive integer. From \eqref{n=1} and \eqref{sp} we
 then obtain $x^n=1$, hence $x=1$. This proves \eqref{2CB}.
\end{proof}

The inequality in Lemma \ref{tnin}(ii) below can be found in \cite[Cor.\;4]{BF4}.

\begin{lemma}\label{tnin}
Suppose a pair $K, \theta$ shares one of the following properies:
\begin{enumerate}[\hskip 4.2mm \rm(TR)]
\item[\rm(TR)]
$K$ is a totally real field, $\theta={\rm id}$;
\item[\rm(CM)] $K$ is a CM-field, $\theta$ is the complex conjugation.
\end{enumerate}
Then, for every nonzero element
$x\in \mathscr O$,
the following hold:
\begin{enumerate}[\hskip 2.2mm \rm(i)]
\item\label{1}
${\rm tr}_{K, \theta}^{\ } (x, x)$
is a positive
integer;
\item\label{3}
${\rm tr}_{K, \theta}^{\ } (x, x)\geqslant
n$ and the equality holds if and only if $x$ is a root of unity.
\end{enumerate}
The condition $(\mathscr O, {{\rm tr}_{K, \theta}}/m)[s]\neq \varnothing$
{\rm(}see {\rm \eqref{ccc})} implies the following:
\begin{enumerate}[\hskip 4.7mm\rm(a)]
\item\label{atnin} $s\geqslant n/m$;
\item\label{btnin} if $s=1$, then $m=n$;
\item\label{ctnin} if $s=2$ and $(\mathscr O, {{\rm tr}_{K, \theta}}/m)$ is
even,
then
$m=n/2$,
\item\label{dtnin} if $s=4$ and $(\mathscr O, {{\rm tr}_{K, \theta}}/m)$ is
even,
then
$m=n/2$ or $n/4$.
\end{enumerate}
\end{lemma}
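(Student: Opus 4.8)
The plan is to reduce both (i) and (ii) to Lemma~\ref{CB} by replacing $x$ with the element $y := x\cdot\theta(x)\in\mathscr O$. The key point is that in both cases (TR) and (CM) this $y$ is a nonzero \emph{totally positive} algebraic integer. Indeed, for each field embedding $\sigma_i\colon K\hookrightarrow\mathbb C$ one has $\sigma_i(y)=\sigma_i(x)\,\sigma_i(\theta(x))$, and in case (TR), where $\theta=\mathrm{id}$ and every $\sigma_i$ is real, this equals $\sigma_i(x)^2>0$, while in case (CM), where $\sigma_i\circ\theta=\overline{\sigma_i}$, it equals $|\sigma_i(x)|^2>0$. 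Since $\mathrm{tr}_{K,\theta}(x,x)=\mathrm{Trace}_{K/\mathbb Q}(y)$, Lemma~\ref{CB}\eqref{1CB} applied to $y$ shows at once that $\mathrm{tr}_{K,\theta}(x,x)$ is a positive integer with $\mathrm{tr}_{K,\theta}(x,x)\geqslant n$, which gives (i) and the inequality in (ii).

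For the equality clause of (ii), Lemma~\ref{CB}\eqref{2CB} says the equality $\mathrm{tr}_{K,\theta}(x,x)=n$ holds precisely when $y=1$, i.e. when $x\,\theta(x)=1$. I would then identify this with $x$ being a root of unity. The implication $(\Leftarrow)$ is immediate: in case (CM) a root of unity $\zeta$ satisfies $\theta(\zeta)=\overline\zeta=\zeta^{-1}$, and in case (TR) the only roots of unity are $\pm1$, so in both cases $x\,\theta(x)=1$. For $(\Rightarrow)$, the computation above shows that $x\,\theta(x)=1$ forces $|\sigma_i(x)|=1$ for every embedding $\sigma_i$; as $x$ is an algebraic integer all of whose conjugates lie on the unit circle, Kronecker's theorem forces $x\in\mu_K$.

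It then remains to read off (a)--(d). Suppose $(\mathscr O,\mathrm{tr}_{K,\theta}/m)[s]\neq\varnothing$ with $s>0$, and pick a witness $x$; it is nonzero, and $\mathrm{tr}_{K,\theta}(x,x)=ms$. The inequality in (ii) gives $ms\geqslant n$, i.e. $s\geqslant n/m$, which is (a). For (b)--(d) I combine this with the divisibility constraints already recorded in Lemma~\ref{prp}: part \eqref{m_d} gives $m\mid n$, and part \eqref{te} gives the sharper $m\mid(n/2)$ whenever the lattice is even. For (b), $s=1$ yields $m\geqslant n$, which together with $m\mid n$ forces $m=n$. For (c), $s=2$ yields $m\geqslant n/2$, which together with $m\mid(n/2)$ forces $m=n/2$. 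For (d), $s=4$ yields $m\geqslant n/4$; since $m\mid(n/2)$, the only divisors of $n/2$ that are at least $n/4$ are $n/2$ and (when $4\mid n$) $n/4$, whence $m\in\{n/2,\,n/4\}$.

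The one genuinely non-mechanical step is the equality characterization in (ii): extracting $x\,\theta(x)=1\Leftrightarrow x\in\mu_K$ is exactly where Kronecker's theorem enters, and this is the point I expect to be the main obstacle. Everything else is a direct application of Lemma~\ref{CB} followed by the elementary divisibility bookkeeping supplied by Lemma~\ref{prp}.
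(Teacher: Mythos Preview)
Your proposal is correct and follows essentially the same route as the paper: reduce to Lemma~\ref{CB} by noting that $y=x\cdot\theta(x)$ is totally positive, invoke Kronecker's theorem for the equality case in (ii), and then deduce (a)--(d) from the divisibility constraints in Lemma~\ref{prp}. The only cosmetic difference is that the paper phrases (c) and (d) in terms of the even integer $n/m$ (so $n/m\leqslant 2$ forces $n/m=2$, and $n/m\leqslant 4$ forces $n/m\in\{2,4\}$), whereas you phrase them in terms of $m$ dividing $n/2$; the content is identical.
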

\begin{proof}
First, we note that if $\sigma\colon K \hookrightarrow \mathbb C$
is a field embedding, then
$\sigma(x\cdot\theta(x))$ is a positive real number. Indeed, if (TR) holds,
then
\begin{equation}\label{reaa}
\sigma(K)\subset \mathbb R\;\;
\mbox{ and }\;\;\sigma(x\cdot\theta(x))=\sigma(x^2)=(\sigma(x))^2;
\end{equation}
whence the claim in this case.
If (CM) holds,
then $\sigma(K)$ is stable with respect to the complex
conjugation of $\mathbb C$ and $\sigma(\theta(x))=\overline{\sigma(x)}$
(see, e.g., \cite[p.\;38]{Wa}). Therefore,
\begin{equation}\label{conju}
\sigma(x\cdot\theta(x))=\sigma(x)\sigma(\theta(x))=
\sigma(x)\overline{\sigma(x)}=|\sigma(x)|^2,
\end{equation}
which proves the claim in this case.

In view of this, \eqref{1} and the inequality in \eqref{3}
follow from \eqref{ti} and Lemma  \ref{CB}\eqref{1CB}.
By Lemma  \ref{CB}\eqref{2CB}, the equality in \eqref{3}
holds if and only if
\begin{equation}\label{=1}
x\cdot \theta(x)=1.
\end{equation}
Assume that \eqref{=1} holds. Then, in the notation of Lemma \ref{CB},
every complex number $\sigma_i(x)$
has the modulus
1. Since $x$ is an algebraic integer,
by Kroneker's theorem \cite{Kron} (see also, e.g., \cite[Lem.\,1.6]{Wa})
this implies that $x$ is a root of unity. Conversely, if $x$
is a root of unity, then, in the above notation, $\sigma(x)$
is a root of unity too, hence the right-hand sides of the equalities
in \eqref{reaa}, \eqref{conju} are equal to 1. Whence, \eqref{=1} holds.
This completes the proof of \eqref{3}.

Suppose that $(\mathscr O,
{{\rm tr}_{K, \theta}}/m)[s]\neq \varnothing$, i.e., there is
$a\in \mathscr O$ such that $
{{\rm tr}_{K, \theta}}(a,a)/m=s$. This
and
\eqref{3} yield
\eqref{atnin}.

Since,  by Lemma \ref{prp}$\rm (\ref{m_d}_2)$,
$n/m$ is an integer, it follows from
\eqref{atnin} that $m=n$ if $s=1$. This proves \eqref{btnin}.

Suppose, moreover, that $(\mathscr O, {{\rm tr}_{K, \theta}}/m)$ is
even.
Then $n/m$ is even
by Lemma \ref{prp}${\rm (\ref{te}_1)}$.
This and \eqref{atnin} then imply that $m=n/2$ if $s=2$.
This proves \eqref{ctnin}.
Finally, if $s=4$, then \eqref{atnin} implies that $n/m=2$ or $4$.
This proves \eqref{dtnin}.
\end{proof}

\begin{lemma}[{{\rm cf.\;\cite[Prop.\;9.1(ii)]{BF2}}}]
\label{2acycl} Fix a positive integer $a$.
Let $K$ be a $2^a$th cyclotomic field,
let $\theta$ be the complex conjugation
if $a>1$ and $\theta={\rm id}$ if $a=1$, and let $\zeta\in K$
be a $2^a$th primitive root of unity. Then the following hold:
\begin{enumerate}[\hskip 4.2mm\rm(i)]
\item\label{2aci} $n=2^{a-1};$
\item\label{2acii} $\{\zeta^j\mid 0\leqslant j\leqslant 2^{a-1}-1\}$ is a basis of the $\Z$-module $\mathscr O;$
\item\label{2aciii} for every elements $\zeta^i$ and $\zeta^j$ of this basis,
$${\rm tr}_{K, \theta}(\zeta^i, \zeta^j)=\begin{cases}n\quad\mbox{if $i=j,$}\\
0\quad \mbox{if $i\neq j;$}\end{cases}$$
\item\label{2aciv} $m=n$.
\end{enumerate}
\end{lemma}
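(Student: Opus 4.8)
The plan is to treat the four assertions in turn; \eqref{2aci} and \eqref{2acii} are standard facts about cyclotomic fields, and \eqref{2aciv} will fall out of the trace computation \eqref{2aciii}, which is the one substantial step. For \eqref{2aci} I would recall that $n=[K:\mathbb Q]=\varphi(2^a)=2^{a-1}$; here $a=1$ degenerates to $2^a=2$, $\zeta=-1$, $K=\mathbb Q$, $\mathscr O=\mathbb Z$, while for $a\geqslant 2$ the field $K$ is totally imaginary with maximal totally real subfield $\mathbb Q(\zeta+\zeta^{-1})$, so complex conjugation is a well-defined nontrivial automorphism and the choice of $\theta$ in the statement is legitimate. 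For \eqref{2acii} I would invoke the classical fact that cyclotomic rings of integers are monogenic: $\mathscr O=\mathbb Z[\zeta]$, with integral basis $1,\zeta,\dots,\zeta^{2^{a-1}-1}$.

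The heart of the matter is \eqref{2aciii}. The case $a=1$ is immediate: the only basis element is $\zeta^0=1$ and ${\rm tr}_{K,\theta}(1,1)={\rm Trace}_{K/\mathbb Q}(1)=1=n$. For $a\geqslant 2$ the involution $\theta$ sends $\zeta\mapsto\overline{\zeta}=\zeta^{-1}$, whence
\[
{\rm tr}_{K,\theta}(\zeta^i,\zeta^j)={\rm Trace}_{K/\mathbb Q}\bigl(\zeta^i\theta(\zeta^j)\bigr)={\rm Trace}_{K/\mathbb Q}(\zeta^{\,i-j}).
\]
For $i=j$ this equals ${\rm Trace}_{K/\mathbb Q}(1)=n$, accounting for the diagonal. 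Everything thus reduces to the vanishing ${\rm Trace}_{K/\mathbb Q}(\zeta^{\,k})=0$ whenever $0<|k|\leqslant 2^{a-1}-1$, and this is the step I expect to be the main obstacle.

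To establish it I would first note that such a $\zeta^k$ is a primitive $2^c$th root of unity with $c\geqslant 2$: writing $k=2^v u$ with $u$ odd, the order of $\zeta^k$ is $2^{a-v}$, and $0<|k|<2^{a-1}$ forces $2^v\leqslant|k|<2^{a-1}$, i.e. $v\leqslant a-2$, so the order is at least $4$. By transitivity of the trace,
\[
{\rm Trace}_{K/\mathbb Q}(\zeta^{\,k})=[K:\mathbb Q(\zeta^{\,k})]\cdot{\rm Trace}_{\mathbb Q(\zeta^{\,k})/\mathbb Q}(\zeta^{\,k}),
\]
and the inner trace is the sum of all primitive $2^c$th roots of unity, i.e. the negative of the coefficient of $X^{2^{c-1}-1}$ in the cyclotomic polynomial $\Phi_{2^c}(X)=X^{2^{c-1}}+1$; since that coefficient is $0$ for $c\geqslant 2$ (equivalently $\mu(2^c)=0$), the inner trace vanishes, and hence so does the whole trace. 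This proves \eqref{2aciii}.

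Finally, \eqref{2aciv} drops out by setting $j=0$ in \eqref{2aciii}: the traces of the basis elements $\zeta^0,\zeta^1,\dots,\zeta^{2^{a-1}-1}$ are $n,0,\dots,0$. As ${\rm Trace}_{K/\mathbb Q}$ is $\mathbb Z$-linear and $\mathscr O$ is free on this basis by \eqref{2acii}, the subgroup ${\rm Trace}_{K/\mathbb Q}(\mathscr O)$ of $\mathbb Z$ is generated by $n,0,\dots,0$, i.e. equals $n\mathbb Z$; by the defining relation \eqref{mmmm} this gives $m=n$.
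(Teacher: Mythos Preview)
Your proof is correct and follows essentially the same route as the paper: parts \eqref{2aci} and \eqref{2acii} are the standard cyclotomic facts $n=\varphi(2^a)$ and $\mathscr O=\mathbb Z[\zeta]$; part \eqref{2aciii} reduces to ${\rm Trace}_{K/\mathbb Q}(\zeta^{i-j})$ and then uses that $\zeta^{i-j}$ has order $2^s$ with $s\geqslant 2$, so the subfield trace vanishes because the minimal polynomial $t^{2^{s-1}}+1$ has no subleading term---exactly the paper's argument, with your bound $v\leqslant a-2$ making explicit the step the paper compresses into ``since $0\leqslant i,j\leqslant 2^{a-1}-1$, we have $s\geqslant 2$.'' The only cosmetic difference is in \eqref{2aciv}: you compute ${\rm Trace}_{K/\mathbb Q}(\mathscr O)=n\mathbb Z$ directly from the basis and invoke the defining relation \eqref{mmmm}, whereas the paper instead reads off $d_{(L,b)}=n$ from the Gram matrix and appeals to Lemma~\ref{prp}$({\rm iii}_1)$ ($d_{(L,b)}=m$); both are immediate from \eqref{2aciii}.
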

\begin{proof}
We have $n=\varphi(2^a)$ (see \cite[Chap.\,VIII, \S3, Thm.\,6]{Lang});
whence\;\eqref{2aci}.

\eqref{2acii} We have $K=\Q(\zeta)$; therefore, $\mathscr O=
\mathbb Z[\zeta]$ (see \cite[Thm. 2.6]{Wa}).
The latter yields \eqref{2acii} because
the degree of the minimal poly\-nomial of $\zeta$ over $\Q$
is $n={\rm rank}\,\mathscr O$.

\eqref{2aciii}  Since $\theta$ is the complex conjugation and
$\zeta$ is a root of unity, we have $\zeta^i\cdot\theta(\zeta^j)=
\zeta^{i-j}$, hence by \eqref{ti},
\begin{equation}\label{trac}
{\rm tr}_{K, \theta}(\zeta^i, \zeta^j)={\rm Trace}_{K/\mathbb Q}(\zeta^{i-j}).
\end{equation}

If $i=j$, then \eqref{trac} and \eqref{trn} yield
${\rm tr}_{K, \theta}(\zeta^i, \zeta^j)=n$.

Let $i\neq j$. The element $\zeta^{i-j}$ is a primitive $2^s$th root of $1$
for some positive integer $s\leqslant a$; hence  its minimal
polynomial over $\mathbb Q$ is
$t^{2^{s-1}}+1$ (see \cite[Chap. VIII, \S3]{Lang}).
Since $0\leqslant i, j\leqslant 2^{a-1}-1$, we have $s\geqslant 2$.
Therefore, the degree of this polynomial is at least $2$.
This implies that ${\rm Trace}_{\mathbb Q(\zeta^{i-j})/\mathbb Q}(\zeta^{i-j})=0$.
In turn, from this, \eqref{trac}, and the equality
\begin{equation} \label{traces}
{\rm Trace}_{K/\mathbb Q}(x)=[K:\mathbb Q(x)]{\rm Trace}_{\mathbb Q(x)/\mathbb Q}(x)\quad\mbox{for every $x\in K$}
\end{equation}
(see, e.g., \cite[Chap.\;II, \S10, (9)]{ZS}),
we obtain that
${\rm tr}_{K, \theta}(\zeta^i, \zeta^j)=0$.

\eqref{2aciv} follows from \eqref{2aciii}, \eqref{m},
and Lemma \ref{prp}$\rm (\ref{m_d}_1)$.
\end{proof}

\begin{lemma}[{{\rm cf.\;\cite[Prop.\;9.1(i)]{BF2}}}]
\label{3bcycl} Fix a positive integer $b$.
Let $K$ be a $3^b$th cyclotomic field,
let $\theta$ be the complex conjugation, and let $\zeta\in K$
be a $3^b$th primitive root of unity. Then the following hold:
\begin{enumerate}[\hskip 4.2mm\rm(i)]
\item\label{3bci} $n=2\cdot 3^{b-1}$;
\item\label{3bcii} $\{\zeta^j\mid 0\leqslant j\leqslant 2\cdot 3^{b-1}-1\}$
is a basis of the $\Z$-module $\mathscr O$;
\item\label{3bciii} for every two elements $\zeta^i$ and $\zeta^j$ of this basis,
$${\rm tr}_{K, \theta}(\zeta^i, \zeta^j)=\begin{cases}n\quad&\mbox{if $i=j$},\\
0\quad &\mbox{if
$3^{b-1}$ does not divide $i-j,$}\\
-n/2\quad&\mbox{if
$i\neq j$ and
$3^{b-1}$ divides  $i- j;$}
\end{cases}$$
\item\label{3bciv} $m=n/2$.
\end{enumerate}
\end{lemma}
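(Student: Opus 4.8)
The plan is to prove Lemma \ref{3bcycl} in close parallel with the already-established Lemma \ref{2acycl}, since the two statements are structurally identical. Part \eqref{3bci} is immediate from $n=\varphi(3^b)=3^b-3^{b-1}=2\cdot 3^{b-1}$, citing the same formula for Euler's totient on prime powers used in Lemma \ref{2acycl}\eqref{2aci}. Part \eqref{3bcii} follows verbatim from the cyclotomic integer-ring theorem $\mathscr O=\mathbb Z[\zeta]$ together with the fact that the minimal polynomial of $\zeta$ over $\mathbb Q$ has degree $n=\mathrm{rank}\,\mathscr O$; the powers $\zeta^0,\ldots,\zeta^{n-1}$ then form a $\mathbb Z$-basis.

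The real content is part \eqref{3bciii}. As in Lemma \ref{2acycl}\eqref{2aciii}, since $\theta$ is complex conjugation and $\zeta$ is a root of unity, $\zeta^i\cdot\theta(\zeta^j)=\zeta^{i-j}$, so by \eqref{ti} we have ${\rm tr}_{K,\theta}(\zeta^i,\zeta^j)={\rm Trace}_{K/\mathbb Q}(\zeta^{i-j})$. The case $i=j$ gives $n$ by \eqref{trn}. For $i\neq j$ set $k:=i-j$ with $0<|k|\leqslant n-1$; I would compute ${\rm Trace}_{K/\mathbb Q}(\zeta^k)$ by passing through the subfield $\mathbb Q(\zeta^k)$ via the transitivity formula \eqref{traces}, exactly as before. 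The element $\zeta^k$ is a primitive $3^s$th root of unity where $3^s=3^b/\gcd(3^b,k)$. The main case distinction is whether $3^{b-1}\mid k$ or not. If $3^{b-1}\nmid k$, then $s\geqslant 2$, so $\zeta^k$ is a primitive $3^s$th root of unity with $s\geqslant 2$; the minimal polynomial is the cyclotomic polynomial $\Phi_{3^s}(t)=t^{2\cdot 3^{s-1}}+t^{3^{s-1}}+1$, whose trace (the negative of the coefficient of the second-highest power) is $0$, hence ${\rm Trace}_{\mathbb Q(\zeta^k)/\mathbb Q}(\zeta^k)=0$ and so ${\rm Trace}_{K/\mathbb Q}(\zeta^k)=0$. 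If instead $3^{b-1}\mid k$ and $i\neq j$, then (given the range $0\leqslant i,j\leqslant n-1$) $\zeta^k$ is a primitive cube root of unity, with minimal polynomial $t^2+t+1$, so ${\rm Trace}_{\mathbb Q(\zeta^k)/\mathbb Q}(\zeta^k)=-1$; multiplying by $[K:\mathbb Q(\zeta^k)]=n/2$ via \eqref{traces} gives ${\rm Trace}_{K/\mathbb Q}(\zeta^k)=-n/2$. Finally part \eqref{3bciv} follows from \eqref{3bciii}, formula \eqref{m}, and Lemma \ref{prp}$\rm (\ref{m_d}_1)$, since the gcd of all the displayed trace values $\{n,0,-n/2\}$ is $n/2$.

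The step requiring the most care is \eqref{3bciii}, specifically identifying the exact power of $3$ dividing the order of $\zeta^k$ and reading off the correct cyclotomic polynomial $\Phi_{3^s}$. The only genuine obstacle compared to the $2$-power case is that here we get a nonzero off-diagonal value $-n/2$ precisely when $\zeta^k$ degenerates to a primitive cube root of unity, which is why the answer has three cases rather than two; I would verify carefully that within the index range $0\leqslant i,j\leqslant n-1$ the condition ``$3^{b-1}\mid i-j$ and $i\neq j$'' forces $\zeta^{i-j}$ to be a primitive cube root of unity (i.e. that $|i-j|\in\{3^{b-1},2\cdot 3^{b-1}\}$), so that no larger primitive $3^s$th root with $s\geqslant 2$ can occur under this divisibility condition.
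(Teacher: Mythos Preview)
Your proposal is correct and follows essentially the same approach as the paper: reduce ${\rm tr}_{K,\theta}(\zeta^i,\zeta^j)$ to ${\rm Trace}_{K/\mathbb Q}(\zeta^{i-j})$, identify $\zeta^{i-j}$ as a primitive $3^s$th root of unity, read off ${\rm Trace}_{\mathbb Q(\zeta^{i-j})/\mathbb Q}(\zeta^{i-j})$ from $\Phi_{3^s}(t)=t^{2\cdot 3^{s-1}}+t^{3^{s-1}}+1$ (which is $0$ for $s\geqslant 2$ and $-1$ for $s=1$), and then apply \eqref{traces}. One tiny slip: in the index range $0\leqslant i,j\leqslant n-1$ one has $|i-j|\leqslant 2\cdot 3^{b-1}-1$, so the value $|i-j|=2\cdot 3^{b-1}$ never actually occurs; but since $\zeta^{\pm 2\cdot 3^{b-1}}$ would also be a primitive cube root, this does not affect the argument.
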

\begin{proof}
The same argument as in the proof of Lemma \ref{2acycl} yields
\eqref{3bci}, \eqref{3bcii}, and \eqref{3bciv}.

\eqref{3bciii} Formula \eqref{trac} still holds and the same argument
as in the proof of Lemma \ref{2acycl}\eqref{2aciii}
yields the validity of \eqref{3bciii}  for $i=j$. Assume
now that $i\neq j$. Then $\zeta^{i-j}$ is a primitive $3^s$th
root of unity for some positive integer
$s\leqslant b$. The degree of its minimal polynomial over $\Q$ is
\begin{equation}\label{deggam}
[\Q(\zeta^{i-j}):\Q]=\varphi(3^s)=2\cdot 3^{s-1}.
\end{equation}
Since the 3rd  cyclotomic polynomial is $t^2+t+1$, this minimal polyno\-mial is
$t^{2\cdot 3^{s-1}}+t^{3^{s-1}}+1$ (see \cite[Chap. VIII, \S3]{Lang}).
Given that $2\cdot 3^{s-1}-3^{s-1}=1$ if and only if $s=1$,
from this we infer that
\begin{equation}\label{trgam}
{\rm Trace}_{\Q(\zeta^{i-j})/\Q}(\zeta^{i-j})=
\begin{cases}0&\quad\mbox{if $s\geqslant 2$},\\
-1&\quad\mbox{if $s=1$}.
\end{cases}
\end{equation}

From  \eqref{3bci} and \eqref{deggam} we obtain $[K:\Q(\zeta^{i-j})]=[K:\Q]/[\Q(\zeta^{i-j}):\Q]=
n/(2\cdot 3^{s-1})=3^{b-s}$. This,
\eqref{traces}, and \eqref{trgam} then yield
\begin{equation}\label{trgam2}
{\rm Trace}_{K/\Q}(\zeta^{i-j})=
\begin{cases}0&\quad\mbox{if $s\geqslant 2$},\\
-n/2&\quad\mbox{if $s=1$}.
\end{cases}
\end{equation}
Clearly, \eqref{trgam2} is equivalent to \eqref{3bciii} for $i\neq j$.
\end{proof}

Given a $q$th cyclotomic field $K$ and a positive
integer $r$ dividing $q$, we denote by $K_r$ and
$\mathscr O_r$ respectively the unique $r$th
cyclotomic subfield of $K$ and its ring of integers.
They are ${\rm Aut}\,K$-stable; for $\alpha\in {\rm Aut}\,K$,
we denote the restriction $\alpha|_{K_r}$  still by $\alpha$.
If $\alpha$ is the complex conjugation of $K$, this
restriction is the complex conjugation of $K_r$.

\begin{lemma}\label{tenpro}
Let  $i$ and $j$ be coprime positive integers. Let $K$ be
a $ij$-th cyclotomic field and let $\theta$ be the complex conjugation.
Then the following hold:

\begin{enumerate}[\hskip 4.2mm\rm(i)]
\item\label{tenproi} the natural $\Q$-algebra homomorphism
\begin{equation}\label{cyte}
K_i\otimes_\Q K_j\to K,\quad x\otimes y\mapsto xy,
\end{equation}
is an isomorphism;
\item\label{tenproii}  for every $x\in K_i$ and $y\in K_j$,
the following equality holds:
\begin{equation}\label{trtrtr}
{\rm Trace}_{K/\Q}(xy)={\rm Trace}_{K_i/\Q}(x)\,{\rm Trace}_{K_j/\Q}(y).
\end{equation}
\item\label{tenproiii} the restriction  of  homomorphism
\eqref{cyte} to $\mathscr O_i\otimes_\mathbb Z\mathscr O_j$
is a ring isomorphism with
$\mathscr O$, which is also a lattice isometry
\begin{equation*}
(\mathscr O_i, {\rm tr}_{K_i, \theta})\otimes_\mathbb Z (\mathscr O_j, {\rm tr}_{K_j, \theta})\to
(\mathscr O, {\rm tr}_{K, \theta}).
\end{equation*}
\end{enumerate}
\end{lemma}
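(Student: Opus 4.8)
The plan is to prove the three assertions in order, leaning at each stage on the explicit monogenic description of cyclotomic rings of integers. For \eqref{tenproi}, I would first observe that, since $\gcd(i,j)=1$, the product of a primitive $i$th root of unity $\zeta_i$ and a primitive $j$th root $\zeta_j$ is a primitive $ij$th root; hence the image of \eqref{cyte} contains both $K_i$ and $K_j$, so it is the compositum $K_iK_j=K$ and \eqref{cyte} is surjective. On the other hand, as $\varphi$ is multiplicative on coprime arguments,
\[
\dim_\Q\left(K_i\otimes_\Q K_j\right)=\varphi(i)\varphi(j)=\varphi(ij)=[K:\Q]=\dim_\Q K,
\]
so a surjective $\Q$-linear map between $\Q$-vector spaces of equal finite dimension is bijective; being also a ring homomorphism, \eqref{cyte} is a $\Q$-algebra isomorphism.

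For \eqref{tenproii} I would use that $\mathrm{Trace}_{K/\Q}(z)$ is the trace of the $\Q$-linear operator ``multiplication by $z$'' on $K$. Writing $m_x$ for multiplication by $x$ on $K_i$ and $m_y$ for multiplication by $y$ on $K_j$, the identity $(xy)(uv)=(xu)(yv)$ shows that under the isomorphism of \eqref{tenproi} the operator ``multiplication by $xy$'' on $K$ corresponds to $m_x\otimes m_y$ on $K_i\otimes_\Q K_j$. The multiplicativity of trace on tensor products,
\[
\mathrm{Tr}(m_x\otimes m_y)=\mathrm{Tr}(m_x)\,\mathrm{Tr}(m_y),
\]
then gives \eqref{trtrtr} at once.

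For \eqref{tenproiii}, the ring-isomorphism claim reduces to showing that the image of $\mathscr O_i\otimes_\Z\mathscr O_j$ under \eqref{cyte} is exactly $\mathscr O$. By \cite[Thm. 2.6]{Wa} we have $\mathscr O_i=\Z[\zeta_i]$, $\mathscr O_j=\Z[\zeta_j]$, and $\mathscr O=\Z[\zeta_{ij}]$; choosing $\zeta_{ij}=\zeta_i\zeta_j$ (legitimate by the coprimality used in \eqref{tenproi}), the image $\mathscr O_i\mathscr O_j=\Z[\zeta_i,\zeta_j]$ contains $\zeta_{ij}$, hence contains $\mathscr O$, while conversely $\zeta_i,\zeta_j$ are algebraic integers of $K$ and so lie in $\Z[\zeta_{ij}]=\mathscr O$, giving $\Z[\zeta_i,\zeta_j]\subseteq\mathscr O$; thus the image equals $\mathscr O$. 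Injectivity is inherited from \eqref{tenproi}, since $\mathscr O_i\otimes_\Z\mathscr O_j$ is free (the $\mathscr O_r$ being free $\Z$-modules) and embeds into $K_i\otimes_\Q K_j$. For the isometry I would combine multiplicativity of $\theta$ with the fact, recorded just before the lemma, that $\theta$ restricts to the complex conjugation on each $K_r$: for $u_1,u_2\in\mathscr O_i$ and $v_1,v_2\in\mathscr O_j$ one has
\[
(u_1v_1)\theta(u_2v_2)=\left(u_1\theta(u_2)\right)\left(v_1\theta(v_2)\right),
\]
with $u_1\theta(u_2)\in K_i$ and $v_1\theta(v_2)\in K_j$, whence \eqref{tenproii} yields
\[
{\rm tr}_{K,\theta}(u_1v_1,u_2v_2)={\rm tr}_{K_i,\theta}(u_1,u_2)\,{\rm tr}_{K_j,\theta}(v_1,v_2),
\]
which is precisely the value of the tensor-product form on $(u_1\otimes v_1,\,u_2\otimes v_2)$.

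The step I expect to be the main obstacle is the surjectivity in \eqref{tenproiii}, i.e.\ showing that the image of the \emph{integral} tensor product is all of $\mathscr O$ and not merely a finite-index subring. For arbitrary number fields this is the assertion that coprime discriminants force $\mathscr O_{K_iK_j}=\mathscr O_i\mathscr O_j$; in the cyclotomic setting it is resolved cleanly because the rings of integers are monogenic with the explicit generators $\zeta_i,\zeta_j,\zeta_{ij}$ and the single relation $\zeta_{ij}=\zeta_i\zeta_j$.
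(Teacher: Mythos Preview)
Your proof is correct and follows essentially the same approach as the paper's: the same dimension count for \eqref{tenproi}, the same ``trace of $m_x\otimes m_y$'' argument for \eqref{tenproii}, and the same reduction of \eqref{tenproiii} to the equality $\mathscr O=\mathscr O_i\mathscr O_j$ together with \eqref{tenproii}. The only difference is expository: where the paper simply cites \cite[Chap.~IV, \S1, Thm.~4]{Lang2} and \cite[Thm.~2.6]{Wa} for $\mathscr O=\mathscr O_i\mathscr O_j$, you supply the short monogenicity argument $\Z[\zeta_i,\zeta_j]=\Z[\zeta_i\zeta_j]$ directly, and you spell out the isometry computation that the paper leaves to the reader.
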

\begin{proof} Let $\zeta_i\in K_i$ and $\zeta_j\in K_j$
be respectively a primitive $i$th and a primitive $j$th root of unity.
Then $\zeta_i\zeta_j$ is a primitive $i j$th root of unity,
hence $K=\Q(\zeta_i\zeta_j)$. This shows that $K$ is
the compositum of $K_i$ and $K_j$; whence \eqref{cyte} is a surjective
homomorphism.
The coprimeness of $i$ and $j$ implies that $[K:\Q]=\varphi(ij)=\varphi(i)\varphi(j)=[K_i:\Q][K_j:\Q]$;
hence
\eqref{cyte} is an injective homomorphism. This proves \eqref{tenproi}.

By
definition of the trace of an element of a number field,
the left-hand side of \eqref{trtrtr} is the trace of the
$\Q$-linear transformation of $K$ given by multiplication
by $xy$. In view of \eqref{tenproi},  it is equal to the trace of the
$\Q$-linear transformation of $\Q(\zeta_i)\otimes_\Q\Q(\zeta_j)$
given by multiplication by $x\otimes y$. Hence
(see \cite[Chap.\,VII, \S5, no. 6]{Bo2}) it is equal to
the product of traces of the $\Q$-linear transformations of
$\Q(\zeta_i)$ and $\Q(\zeta_j)$ given by mul\-ti\-plications
by res\-pec\-tively $x$ and $y$. By the mentioned definition,
the latter product
is equal to the right-hand side of  \eqref{trtrtr}.
This proves\;\eqref{tenproii}.

In view of the equality $\mathscr O=\mathscr O_i\mathscr O_j$
(see \cite[Chap. IV, \S1, Thm. 4]{Lang2}, \cite[Thm. 2.6]{Wa})
the first statement in \eqref{tenproiii} follows
from \eqref{tenproi}. The second
follows from the first in view of \eqref{ti},
\eqref{trtrtr}, and the definition of tensor product of
lattices (see  \cite[Chap.\;1, \S5]{MH}).
\end{proof}

In the next lemma, we use  that if $K$ is totally real,
then ${\rm discr}\,{K/\mathbb Q}$ is positive
(see, e.g., \cite[Prop.\;1.2a]{Koch}).

\begin{lemma}
\label{lowerDreal}
Let $K$ be a totally real number field of degree $n\!>\!1$ over\;$\mathbb Q$.
\begin{enumerate}[\hskip 2.2mm\rm(i)]
\item\label{52}
If $n\leqslant  24$, then $\sqrt[n]{{\rm discr}\,{K/\mathbb Q}}>n$.
\item\label{54}
If $3 \leqslant  n \leqslant  75$, then
$\sqrt[n]{{\rm discr}\,{K/\mathbb Q}}>(n+1)^2/2n$.
\end{enumerate}
\end{lemma}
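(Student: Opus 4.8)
The plan is to read both inequalities as lower bounds on the \emph{root discriminant} ${\rm rd}(K):=\sqrt[n]{{\rm discr}\,{K/\mathbb Q}}$, which is a well-defined positive real number precisely because ${\rm discr}\,{K/\mathbb Q}>0$ for totally real $K$ (the fact recalled just before the statement). Since in (i) the degree ranges over $2\le n\le 24$ and in (ii) over $3\le n\le 75$, I would reduce each part to a \emph{finite} numerical verification: it suffices to exhibit, for each admissible degree $n$, an unconditional lower bound $B(n)\le {\rm rd}(K)$ valid for \emph{every} totally real field $K$ of that degree, and then to check the elementary inequalities $B(n)>n$ for $2\le n\le 24$ and $B(n)>(n+1)^2/(2n)$ for $3\le n\le 75$.

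For the bounds $B(n)$ I would invoke the analytic lower estimates for root discriminants obtained from Weil's explicit formula for the Dedekind zeta function $\zeta_K$ evaluated against a suitable positive test function, specialized to the totally real case $r_1=n$, $r_2=0$; these are Odlyzko's unconditional discriminant bounds, available in tabulated form in the literature. For the very smallest degrees, where the target is closest to the true minimum, I would instead use exact arithmetic: for $n=2$ every totally real quadratic field satisfies ${\rm discr}\,{K/\mathbb Q}\ge 5$, so ${\rm rd}(K)\ge\sqrt 5>2$, settling part (i) for $n=2$; for $n=3$ the minimal totally real cubic discriminant is $49$, whence ${\rm rd}(K)\ge 49^{1/3}>3>16/6$, covering both parts. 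For all larger degrees one uses the analytic bound $B(n)$, which has the decisive advantage that it does not require knowledge of the actual minimizing field.

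The structural reason the claim holds is that $B(n)$ increases with $n$ toward the unconditional limiting constant for totally real fields (near $4\pi e^{\gamma+1}\approx 60.8$), so it eventually dominates the linear target $n$ of (i) and, with far more room to spare, the target $(n+1)^2/(2n)\sim n/2$ of (ii); the finite ranges $n\le 24$ and $n\le 75$ are chosen exactly so that $B(n)$ stays above the respective targets throughout. The main obstacle is therefore quantitative rather than conceptual: one must confirm the inequalities at the largest admissible degrees, $n=24$ in (i) and $n=75$ in (ii), where $B(n)$ is closest to its target, and one must take care that the tabulated estimates used are the \emph{unconditional} ones and not the sharper bounds that presuppose the Generalized Riemann Hypothesis, since the lemma must hold without any such hypothesis. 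Once a reliable unconditional table of $B(n)$ is fixed, both inequalities follow by direct inspection of the finitely many cases.
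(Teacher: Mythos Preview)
Your approach is essentially the same as the paper's: the paper simply cites Diaz y Diaz's tables \cite{Diaz} (Table~2, Cas Totalement R\'eel), which are precisely the tabulated unconditional Odlyzko-type lower bounds on ${\rm rd}(K)$ for totally real fields that you propose to invoke, and reads off the two inequalities for the finitely many degrees in question. Your additional remarks about handling $n=2,3$ by exact minimal discriminants and about the asymptotic behavior of $B(n)$ are correct but not needed once one has the Diaz y Diaz table, which already covers those degrees.
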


\begin{proof}
This readily follows from
 \cite[Table 2 (Cas To\-ta\-lement Reel), p.\,1]{Diaz}.
\end{proof}

\begin{lemma}
\label{lowerDCM}
Let $K$ be a CM-field of degree $n>2$ over $\mathbb Q$.
If $n\leqslant  48$, then $\sqrt[n]{|{\rm discr}\,{K/\mathbb Q}|}>n/2$.
\end{lemma}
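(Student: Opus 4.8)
The plan is to reduce the CM case to the totally real case already handled in Lemma \ref{lowerDreal}, by exploiting the relation \eqref{pCM} between the discriminants of $K$ and of its maximal totally real subfield $K^\theta$. Write $K_0:=K^\theta$ and $n_0:=[K_0:\Q]=n/2$, so that $2<n\leqslant 48$ forces $1\leqslant n_0\leqslant 24$. By \eqref{pCM} we have $({\rm discr}\,K_0/\Q)^2\mid {\rm discr}\,K/\Q$, hence
\begin{equation*}
|{\rm discr}\,K/\Q|\geqslant ({\rm discr}\,K_0/\Q)^2,
\end{equation*}
where the right-hand side is a genuine lower bound because ${\rm discr}\,K_0/\Q>0$ for a totally real field (as noted just before Lemma \ref{lowerDreal}). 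Taking $n$-th roots, it suffices to bound $({\rm discr}\,K_0/\Q)^{2/n}=\big(\sqrt[n_0]{{\rm discr}\,K_0/\Q}\big)$ from below, since $2/n=1/n_0$.

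Thus the task becomes: show $\sqrt[n_0]{{\rm discr}\,K_0/\Q}>n/2=n_0$ for a totally real field $K_0$ of degree $n_0$. When $n_0\geqslant 2$ this is \emph{exactly} the content of Lemma \ref{lowerDreal}\eqref{52}, whose hypothesis $n_0\leqslant 24$ holds here. This immediately gives $\sqrt[n]{|{\rm discr}\,K/\Q|}\geqslant \sqrt[n_0]{{\rm discr}\,K_0/\Q}>n_0=n/2$, as required. The one case not covered by Lemma \ref{lowerDreal} is $n_0=1$, i.e. $K_0=\Q$; but then $K$ is an imaginary quadratic field with $n=2$, which is excluded by the hypothesis $n>2$. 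Hence every relevant value of $n_0$ falls under Lemma \ref{lowerDreal}\eqref{52}.

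I expect the main subtlety to be purely bookkeeping rather than mathematical: one must confirm that $K^\theta$ is genuinely totally real (which is part of the definition of a CM-field together with $\theta$ being complex conjugation), that the degree inequalities translate correctly under $n_0=n/2$, and that the exponent identity $2/n=1/n_0$ is used consistently so that squaring the discriminant of $K_0$ and then taking the $n$-th root recovers precisely the $n_0$-th root appearing in Lemma \ref{lowerDreal}. Once these identifications are in place, the inequality follows in one line, with no further estimation needed; the real work was done in compiling Diaz y Diaz's tables that underlie Lemma \ref{lowerDreal}.
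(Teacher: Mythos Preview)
Your proposal is correct and follows essentially the same route as the paper: reduce to the totally real subfield $K^\theta$ via the divisibility relation \eqref{pCM}, rewrite $\sqrt[n]{|{\rm discr}\,K/\Q|}\geqslant\sqrt[n_0]{{\rm discr}\,K_0/\Q}$ with $n_0=n/2$, and invoke Lemma \ref{lowerDreal}\eqref{52}. Your explicit treatment of the edge case $n_0=1$ (ruled out by $n>2$) is a welcome clarification that the paper leaves implicit.
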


\begin{proof} In view of \eqref{pCM}, we have the inequality
\begin{equation}\label{KK}
\textstyle\sqrt[n]{|{\rm discr}\,{K/\mathbb Q}|}\geqslant \!\!\sqrt[n/2]{{\rm discr}\,{K^\theta/\mathbb Q}}.
\end{equation}
The claim then readily follows from
the lower bounds on the right-hand side of \eqref{KK}, which is
obtained by applying
Lem\-ma\;\ref{lowerDreal}\eqref{52}
to
the totally real number field $K^\theta$  of degree $n/2$.
\end{proof}

\begin{proposition}\label{comp}
Let
$E_K$ be the $n$-element set of all field
embed\-dings $K \hookrightarrow \C$
and let $b_K$ be
the $\mathbb Q$-bilinear map
\begin{equation}\label{cem}
b_K\colon K\times K\to \mathbb C, \quad b_K(x, y):=\textstyle\sum_{\sigma\in E_K}\sigma(x)\overline{\sigma(y)}.
\end{equation}
Then the following hold:
\begin{enumerate}[\hskip 2.2mm\rm(i)]
\item The $\mathbb Q$-linear span of $b_K(K\times K)$ is a proper subset of $\mathbb R$
    con\-taining
    $\mathbb Q$.
\item $b_K$ is symmetric and positive-definite.
\item
Properties {\rm (a), (b), (c)} listed below are equivalent:
\begin{enumerate}[\hskip 0mm \rm(a)]
\item $b_K(K\times K)=\mathbb Q$.
\item $b_K({\mathscr O}\times {\mathscr O})\subseteq\mathbb Q$.
\item There is an involutive field automorphism $\tau\in {\rm Aut}\,K$ such that
    $b_K={\rm tr}_{K, \tau}$.
\end{enumerate}
\item If {\rm (c)} holds, then either $K$ is totally real and $\tau={\rm id}$, or $K$ is a CM-field and $\tau$ is the complex conjugation.
\end{enumerate}
\end{proposition}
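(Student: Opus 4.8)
The plan is to derive all four assertions from two elementary facts about the canonical embedding $x\mapsto(\sigma(x))_{\sigma\in E_K}$: that complex conjugation of $\mathbb C$ permutes $E_K$ via $\sigma\mapsto\bar\sigma$, where $\bar\sigma(x):=\overline{\sigma(x)}$, and that the isomorphism $K\otimes_\Q\mathbb C\xrightarrow{\sim}\mathbb C^{E_K}$, $x\otimes1\mapsto(\sigma(x))_\sigma$, makes the embeddings $\mathbb C$-linearly independent as functionals on $K$. For (ii), reindexing the sum in \eqref{cem} by $\sigma\mapsto\bar\sigma$ gives $\overline{b_K(x,y)}=b_K(x,y)$, so $b_K$ is real-valued; symmetry then follows from $b_K(y,x)=\overline{b_K(x,y)}=b_K(x,y)$, and positive-definiteness from $b_K(x,x)=\sum_{\sigma\in E_K}|\sigma(x)|^2$, which vanishes only when $x=0$. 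For (i), every value $b_K(x,y)$ is a finite sum of products of conjugates of $x$ and of $y$, hence a real algebraic number, so the $\Q$-span of $b_K(K\times K)$ lies in the field $\mathbb R\cap\overline\Q\subsetneq\mathbb R$; it contains $\Q$ because $b_K(q,1)=nq$ for $q\in\Q$.

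For (iii), three of the implications are immediate. We have $\mathrm{(a)}\Rightarrow\mathrm{(b)}$ since $\mathscr O\times\mathscr O\subseteq K\times K$; and $\mathrm{(c)}\Rightarrow\mathrm{(a)}$ since ${\rm tr}_{K,\tau}$ is $\Q$-valued by \eqref{ti} and attains every rational through ${\rm tr}_{K,\tau}(q,1)=nq$. For $\mathrm{(b)}\Rightarrow\mathrm{(a)}$, note that $\mathscr O$ contains a $\Q$-basis of $K$ and $b_K$ is $\Q$-bilinear, so rationality of $b_K$ on $\mathscr O\times\mathscr O$ propagates to all of $K\times K$. Thus the whole of (iii) reduces to the single implication $\mathrm{(b)}\Rightarrow\mathrm{(c)}$.

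The implication $\mathrm{(b)}\Rightarrow\mathrm{(c)}$ is where I expect the real work to lie. Assuming (a) (equivalent to (b) by the previous paragraph), for each fixed $y\in K$ the map $x\mapsto b_K(x,y)$ is a $\Q$-valued $\Q$-linear functional on $K$; since the trace form ${\rm tr}_{K,\mathrm{id}}$ is nondegenerate, there is a unique $T(y)\in K$ with $b_K(x,y)={\rm Trace}_{K/\Q}(x\cdot T(y))$ for all $x\in K$, and uniqueness makes $T\colon K\to K$ a $\Q$-linear map. Expanding both sides through the embeddings yields $\sum_{\sigma\in E_K}\sigma(x)\bigl(\overline{\sigma(y)}-\sigma(T(y))\bigr)=0$ for every $x\in K$; by the $\mathbb C$-linear independence of the embeddings this forces the pointwise identity $\sigma(T(y))=\overline{\sigma(y)}$ for all $\sigma\in E_K$ and all $y\in K$. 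This identity is the crux: it shows at once that $T$ is multiplicative and sends $1$ to $1$ (because complex conjugation does), so $T$ is a nonzero ring homomorphism and hence, being an injective $\Q$-linear endomorphism of the finite-dimensional $\Q$-space $K$, a field automorphism $\tau:=T$; moreover $\tau^2=\mathrm{id}$ since conjugation is an involution. Then $b_K(x,y)={\rm Trace}_{K/\Q}(x\cdot\tau(y))={\rm tr}_{K,\tau}(x,y)$, which is exactly (c). The delicate point, and the main obstacle, is the passage from the trace-level equality to the pointwise equality of embeddings, for which the $\mathbb C$-independence of $E_K$ is indispensable.

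Finally, for (iv), if (c) holds then comparing $b_K={\rm tr}_{K,\tau}$ through the embeddings gives, by the same independence argument, the identity $\sigma(\tau(y))=\overline{\sigma(y)}$ for all $\sigma$ and $y$, from which the structure of $K$ can be read straight off. If $\tau=\mathrm{id}$, then $\sigma(y)=\overline{\sigma(y)}$ for all $\sigma,y$, so every embedding is real and $K$ is totally real. If $\tau\neq\mathrm{id}$, then no embedding can be real, since a real $\sigma$ would give $\sigma(\tau(y))=\sigma(y)$ and hence $\tau=\mathrm{id}$ by injectivity of $\sigma$; thus $K$ is totally imaginary. The fixed field $K^\tau$ has index $2$ in $K$, and for $y\in K^\tau$ the identity gives $\sigma(y)=\overline{\sigma(y)}$, so $\sigma(K^\tau)\subset\mathbb R$ for every $\sigma$ and $K^\tau$ is totally real. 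Hence $K$ is a CM-field and $\tau$, which agrees with complex conjugation under every embedding, is the complex conjugation, completing the proof.
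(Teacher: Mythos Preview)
Your proof is correct and follows essentially the same route as the paper: represent $b_K(\cdot,y)$ via the nondegenerate trace form to produce $\tau$, then use linear independence of the embeddings (the paper phrases this as Artin's theorem on characters) to obtain the pointwise identity $\sigma(\tau(y))=\overline{\sigma(y)}$, from which everything else follows. The only differences are local: for (i) you invoke algebraicity of the values where the paper uses countability of $K$, and for (iv) you argue directly from the pointwise identity whereas the paper simply cites Lemma~\ref{TRCM} on the signature of ${\rm tr}_{K,\tau}$; your version is a touch more self-contained but neither is materially shorter or stronger.
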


\begin{proof}
Below we use the notation
\begin{equation*}
\iota\colon \mathbb C\to\mathbb C, \quad \iota(z):=\overline{z}.
\end{equation*}

(i) In view of \eqref{cem} and \eqref{sp}, for every  $q\in \mathbb Q$, we have $b_K(q, 1)=nq$; whence the inclusion $\mathbb Q\subseteq b_K(K\times K)$.

For every $\sigma\in E_K$, we have $\iota\circ \sigma\in E_K$; therefore,
\begin{equation}\label{perm}
\iota\circ E_K:=\{\iota\circ \sigma\mid \sigma\in E_K\}=E_K.
\end{equation}
From \eqref{cem} and \eqref{perm}, for every $x, y\in K$, we deduce the following:
\begin{equation}
\label{R}
\begin{split}
\overline{b_K(x, y)}&=\textstyle\sum_{\sigma\in E_K}\overline{\sigma(x)}\sigma(y)=
b_K(y, x)\\
&=
\textstyle\sum_{\sigma\in E_K}(\iota\circ\sigma)(x) \overline{(\iota\circ\sigma)(y)}
=\textstyle\sum_{\delta\in \iota\circ E_K}\delta(x)\overline{\delta(y)}\\
&=
\textstyle\sum_{\delta\in E_K}\delta(x)\overline{\delta(y)}=b_K(x, y);
\end{split}
\end{equation}
whence the inclusion $b_K(K\times K)\subseteq \mathbb R$. Since $K$ is countable, while $\mathbb R$ is not, this inclusion is proper.

(ii) follows from \eqref{R} and \eqref{cem}.

(iv)
follows from (ii) and
 Lemma \ref{TRCM}.

(iii) Since $\mathscr O\subset K$, we have (a)$\Rightarrow$(b), and since $K$ is the $\mathbb Q$-linear span of $\mathscr O$, we have (b)$\Rightarrow$(a).

Next, (c)$\Rightarrow$(a) because of \eqref{ti}.

Conversely, assume that (a) holds. Then we have two nondegenerate $\mathbb Q$-bilinear maps
\vskip -7mm
\begin{equation*}
\xymatrix@C=3mm{K\times K\hskip 5mm\ar@/_1.0pc/[rr]_{\hskip 3mm b_K}\ar@/^1.0pc/[rr]^{{\rm tr}_{K, {\rm id}}}&&\mathbb Q.}
\end{equation*}

Therefore, by \cite[Chap. XIII, \S5]{Lang} and \eqref{ti}, there is a nondegenerate $\mathbb Q$-linear map $\tau\colon K\to K$ such that
\begin{equation}\label{tau}
b_K(x, y)={\rm Trace}_{K/\mathbb Q}^{\ }(x\!\cdot\! \tau(y))\quad \mbox{for all $x, y\in K$}.
\end{equation}
We claim that $\tau$ is an involutive field automorphism of $K$; if this is proved, then
\eqref{tau}, \eqref{ti} show that (c) holds.

To prove the claim, we notice that using
\eqref{cem} and \eqref{sum}, we can rewrite \eqref{tau} as follows:
\begin{equation}\label{rew}
\textstyle\sum_{\sigma\in E_K}\sigma(x)\overline{\sigma(y)}=
\sum_{\sigma\in E_K} \sigma(x)\sigma(\tau(y))
\quad \mbox{for all $x, y\in K$}.
\end{equation}
Since for every $\sigma\in E_K$, the map $K^\times \to \mathbb C^\times$, $x\mapsto \sigma(x)$,  is a group homomorphism,
by Artin's theorem on the linear independence of cha\-racters
\cite[Chap. VIII, \S4, Thm. 7]{Lang},  we conclude from \eqref{rew} that
\begin{equation}\label{ati}
\overline{\sigma(y)}=\sigma(\tau(y))\quad\mbox{for all $\sigma\in E_K$, $y\in K$}.
\end{equation}
In turn, \eqref{ati} implies that the subfield $\sigma(K)$ of $\mathbb C$ is $\iota$-stable and
the following diagram, in which $\gamma:=\iota|_{\sigma(K)}$,
is commutative:
\begin{equation}\label{diagr}
\xymatrix@C=7mm{
K\ar[r]^{\hskip -2mm\sigma}\ar@/_1.6pc/[rrr]_{\hskip 3mm \hskip -3mm\tau}&\sigma(K)\ar[r]^{\gamma}&\sigma(K)\ar[r]^{\hskip 3mm\sigma^{-1}}&K.
}
\end{equation}
Since each of the upper arrows in \eqref{diagr} is a field isomorphism, $\tau$ is a field automorphism. Moreover, since $\tau=\sigma^{-1}\circ\gamma\circ\sigma$ and $\gamma$ (being the restriction of involution $\iota$) is involutive, $\tau$ is involutive as well. This completes the proof.
\end{proof}

\section{When is $(\mathscr O, {\rm tr}_{K, \theta})$
a root lattice?}\label{rola}
\begin{proof}[Proof of Theorem {\rm \ref{ridnew}}]
In view of Examples \ref{ex1}, \ref{ex2}, \ref{ex3},
the ``if'' part is clear. To prove the ``only if'' one, suppose that
$({\mathscr O}  , {\rm tr}_{K, \theta})$ is a root lattice.
Then by Corollary \ref{fie},  either $K$ is a totally real
field and $\theta={\rm id}$, or $K$ is a CM-field
and $\theta$ is the complex conjugation.

There are two possibilities, which we will consider separately:
\begin{enumerate}[\hskip 4.2mm\rm(a)]
\item the lattice $({\mathscr O}  , {\rm tr}_{K, \theta})$
is primitive or, equivalently,
at least one of its indecomposable components is not isometric to
$\mathbb A_1$.

\item the lattice $({\mathscr O}  , {\rm tr}_{K, \theta})$
is nonprimitive or, equivalently, it is iso\-met\-ric to
$\mathbb A_1^{a_1}$ for some $a_1$.
\end{enumerate}
Our considerations below exploit the fact that
\begin{equation}\label{dA}
{\rm discr}\hskip .4mm{\mathbb A}_\ell=\ell+1
\quad \text{for every $\ell$,}
\end{equation}

We first assume that (a) holds. Then
\begin{equation}\label{Kt1new}
m^{\ }=1.
\end{equation}

If there is an indecomposable component of
$({\mathscr O}, {\rm tr}_{K, \theta})$ isometric to
$\Z^1$,
then $(\mathscr O, {\rm tr}_{K, \theta})[1]\neq\varnothing$.
By Lemma \ref{tnin}\eqref{btnin},
this and \eqref{Kt1new} imply
that
$n=1$, i.e.,  $K=\mathbb Q$ and $\theta={\rm id}$.

If every indecomposable component of
$({\mathscr O}, {\rm tr}_{K, \theta})$ is not isometric to $\Z^1$,
then
$(\mathscr O, {\rm tr}_{K, \theta})$ is an even
lattice and
$(\mathscr O, {\rm tr}_{K, \theta})[2]\nobreak
\neq\varnothing$. By Lemma \ref{tnin}\eqref{ctnin},
from this and \eqref{Kt1new} we deduce that $n=2$, i.e.,
\begin{equation}\label{fienew}
K=\mathbb Q\big(\sqrt{c}\big) \quad \text{for a square free integer $c$}.
\end{equation}
Since the sum of ranks of all indecomposable
components of $(\mathscr O, {\rm tr}_{K, \theta})$ is $n$,
we infer from $n=2$ and
(a)
that $(\mathscr O, {\rm tr}_{K, \theta})$ is
isometric to $\mathbb A_2$.
  This, \eqref{Kt1new},  \eqref{dA}, and
  Lem\-ma \ref{prp}\eqref{test}, $\rm (\ref{m_d}_1)$, \eqref{dgdg}
 then imply that
\begin{equation}\label{Da2}
|{\rm discr}\,K/\mathbb Q|=3.
\end{equation}
On the other hand, \eqref{fienew} yields
\begin{equation}\label{ffdis}
{\rm discr}\,K/\mathbb Q=\begin{cases}
4c& \text{ if } c\equiv 2, 3 \;{\rm mod}\, 4,\\
c  & \text{ if } c\equiv 1 \; {\rm mod}\, 4
\end{cases}
\end{equation}
(see, e.g., \cite[Chap.\;13, Prop.\;13.1.2]{IR}).
From
\eqref{ffdis} and \eqref{Da2}
we conclude  that $c=-3$; whence
$K=\mathbb Q(\sqrt{-3})$ and $\theta$ is
the complex conjugation. This completes the
consideration of case (a).

Now we assume that (b) holds. Then $m=2$ and
$({\mathscr O},
{{\rm tr}_{K, \theta}/m})$ is isometric to $\mathbb Z^{a_1}$.
Hence $({\mathscr O}, {{\rm tr}_{K, \theta}/m})[1]\neq\varnothing$.
By Lemma \ref{tnin}\eqref{btnin}, this yields $n=a_1=2$,
hence again \eqref{fienew} and \eqref{ffdis} hold.
From \eqref{dA} and Lemma \ref{prp}\eqref{dgdg}
we obtain $|{\rm discr}\,K/\mathbb Q|=4$.
In view of \eqref{ffdis}, this yields $c=-1$; whence
$K=\mathbb Q(\sqrt{-1})$ and $\theta$ is the
complex conjugation.
This completes the consi\-de\-ra\-tion of case  (b).
\end{proof}

\section{When is $(\mathscr O, {\rm tr}_{K, \theta})$
similar to $\mathbb Z^n$?}\label{szn}

\begin{proof}[Proof of Theorem {\rm \ref{pureodd}}]
Since $\mathbb Z^n$ is primitive, we have
\eqref{pureodda}$\Leftrightarrow$\eqref{pureoddc}.
If a lattice $(L, b)$ is isometric to $\mathbb Z^1$,
then $(L, 2b)$ is isometric to $\mathbb A^1$. This yields
implications
\eqref{pureodda}$\Leftrightarrow$\eqref{pureoddb} and \eqref{pureoddc}$\Leftrightarrow$\eqref{pureoddd}.

\vskip 1mm

\eqref{pureodda}$\Rightarrow$\eqref{pureodde}
Suppose that  $(\mathscr O, {\rm tr}_{K, \theta})$
is similar to $\mathbb Z^n$.
Then, by Corollary \ref{fie}, either $K$ is a totally
real field and $\theta={\rm id}$, or $K$ is a CM-field
and $\theta$ is the complex conjugation. Since
$(\mathscr O, {\rm tr}_{K, \theta}/m)$ and $\mathbb Z^n$
are isometric, and the $\Z$-module $\mathbb Z^n$ is
generated by
$\mathbb Z^n[1]$, we deduce from Lemma \ref{tnin}\eqref{btnin} that
\begin{equation}\label{m=n}
m=n,
\end{equation}
  and \eqref{m=n} implies by Lemma \ref{tnin}\eqref{3} that
  \begin{equation}\label{span}
  \mbox{$\mathscr O$ is the $\mathbb Z$-linear span of $\mu_K$.}
  \end{equation}

Suppose that the order of the cyclic group $\mu_K$
is divisible by an odd prime $p$. Then there is
$x\in \mu_K$
which is a $p$th primitive root of unity.
The minimal polynomial of $x$ over
$\mathbb Q$ is
$t^{p-1}+t^{p-2}+\cdots+1$, hence
 \begin{equation}\label{trdim}
 {\rm Trace}_{\Q(x)/\Q}(x)=-1\quad \mbox{and}
 \quad [\Q(x):\Q]=p-1\ge 2.
  \end{equation}
  Since ${\rm Trace}_{K/\mathbb Q}(x)=
  [K:\mathbb Q(x)]{\rm Trace}_{\Q(x)/\Q}(x)$, we
  obtain from \eqref{trdim} that
\begin{equation}\label{fractio}
0<{\rm Trace}_{K/\Q}(-x)=\frac{n}{p-1}<n.
\end{equation}
However, in view of \eqref{mmmm}
and \eqref{m=n}, the integer ${\rm Trace}_{K/\Q}(-x)$
is divisible by $n$. This contradics \eqref{fractio}.
The obtained contradiction proves that there a positive
integer $a$ such that $\mu_K$ is a cyclic group of
order $2^a$. In view of \eqref{span}, this implies that $K$
is a $2^a$th cyclotomic field. In particular, it is a CM-field,
and therefore, $\theta$ is the complex conjugation.
This proves implication \eqref{pureodda}$\Rightarrow$\eqref{pureodde}.

\vskip 1mm

\eqref{pureodde}$\Rightarrow$\eqref{pureoddc}
Let $K$ be a $2^a$th cyclotomic field for
a positive integer $a$
and let $\theta$ be the complex conjugation.
By Lemma \ref{2acycl} (whose notation we use),
$n=2^{a-1}$, $m=n$, and
$(\mathscr O, {\rm tr}_{K, \theta}/m)$ is the orthogonal
direct sum of the sublattices
$\mathbb Z\zeta^j$, $0\leqslant j\leqslant 2^{a-1}-1$,
each of which is isometric to\;$\mathbb Z^1$.
This proves implication
\eqref{pureodde}$\Rightarrow$\eqref{pureoddc}.

The last statement of Theorem \ref{pureodd}
follows from Lemma \ref{2acycl}\eqref{2aciii}.
\end{proof}

\section{When is $(\mathscr O, {\rm tr}_{K, \theta})$
similar to\\ an even primitive root lattice?}\label{prprl}

\begin{proof}[Proof of Theorem {\rm \ref{simeven}}] The implications
\eqref{simeva}$\Leftrightarrow$\eqref{simevb}
and \eqref{simevc}$\Rightarrow$\eqref{simevb} are clear.

\eqref{simeva}$\Rightarrow$\eqref{simevd}
Suppose that  $(\mathscr O, {\rm tr}_{K, \theta})$
is similar to an even primitive root lattice $(L, b)$.
Then, by Corollary \ref{fie}, either $K$ is a totally
real field and $\theta={\rm id}$, or $K$ is a CM-field
and $\theta$ is the complex conjugation. Since
$(\mathscr O, {\rm tr}_{K, \theta}/m)$ and $(L, b)$
are isometric, and $(L, b)$ is an even lattice
generated by $(L, b)[2]$, we deduce from
Lemma \ref{tnin}\eqref{ctnin} that
\begin{equation}\label{m=n/2}
m=n/2,
\end{equation}\vskip 2mm\noindent
  and \eqref{m=n/2} implies by Lemma \ref{tnin}\eqref{3}
  that $\mathscr O$ is the $\mathbb Z$-linear span of
  $\mu_K$.
  Hence $K$ is a cy\-c\-lo\-tomic field and $\theta$
  is the complex conjugation.

There is an odd prime $p$ dividing  the order of
$\mu_K$: otherwise, this order is $2^a$ for some $a$,
hence  $m=n$ by Theorem \ref{pureodd}, which
contradicts \eqref{m=n/2}. Thus, we can find an element
$x\in \mu_K$ which is a $p$th primitive root of
unity. The same argument as in the above proof of implication \eqref{pureodda}$\Rightarrow$\eqref{pureodde} of
Theorem \ref{pureodd}
shows that
\eqref{fractio} holds. In view of
\eqref{mmmm} and \eqref{m=n/2}, the
integer ${\rm Trace}_{K/\Q}(-x)$ is divisible by $n/2$.
This and \eqref{fractio} then yield $p-1=2$, i.e., $p=3$.
Thereby we proved that $K$ is a $2^a3^b$th cyclotomic
field for some $a\geqslant 0$, $b>0$.
Since every $3^b$th cyclotomic field is simul\-taneously
a $2^13^b$th cyclotomic field, we may assume that $a>0$.
This com\-p\-letes the proof of
implication
\eqref{simeva}$\Rightarrow$\eqref{simevd}.

\eqref{simevd}$\Rightarrow$\eqref{simevc}
Let $K$ be a $2^a3^b$th cyclotomic field for some $a>0$, $b>0$.
We use the notation introduced in the paragraph
immediately preceding Lemma \ref{tenpro} and put
\begin{equation}\label{notatio2}
i:=2^a, \;\;j:=3^b,\;\; n_i:=[K_i:\Q],\;\; n_j:=[K_j:\Q].
\end{equation}\vskip 2mm\noindent
By Lemma \ref{tenpro}\eqref{tenproi} we have $n=n_in_j$.
Hence by Lemma \ref{tenpro}\eqref{tenproiii} there
is a lattice isometry
\begin{equation}\label{isotens}
(\mathscr O_i, {\rm tr}_{K_i, \theta}/n_i)\otimes_\mathbb Z \big(\mathscr O_j, {\rm tr}_{K_j, \theta}/(n_j/2)\big)\to
\big(\mathscr O, {\rm tr}_{K, \theta}/(n/2)\big).
\end{equation}\vskip 2mm

In view of \eqref{notatio2} and Lemmas \ref{2acycl}, \ref{3bcycl},
the lattices
$(\mathscr O_i, {\rm tr}_{K_i, \theta}/n_i)$  and
$\big(\mathscr O_j, {\rm tr}_{K_j, \theta}/(n_j/2)\big)$
are isometric to respectively
$\Z^{n_i}$ and ${\mathbb A}_2^{n_j/2}$.
Since
the lattices
$\mathbb Z^1\otimes_{\mathbb Z} \mathbb A_2$ and
$\mathbb A_2$ are isometric, the existence of
isomorphism \eqref{isotens} then implies that the lattice
$\big(\mathscr O, {\rm tr}_{K, \theta}/(n/2)\big)$ is isometric to
$\mathbb A_2^{n/2}$.
This com\-p\-letes the proof of
implication
\eqref{simevd}$\Rightarrow$\eqref{simevc}.

The last two statements of Theorem \ref{simeven}
follow from Lemma \ref{2acycl}\eqref{2aciii},\eqref{2aciv}, Lemma \ref{3bcycl}\eqref{3bciii},\eqref{3bciv}, and
Lemma \ref{tenpro}\eqref{tenproii}.
\end{proof}

\section{When is $(\mathscr O, {\rm tr}_{K, \theta})$ similar to\\
a positive-definite even unimodular lattice
of rank $\leqslant 48$?}\label{prpul}

\begin{proof}[Proof of Theorem {\rm \ref{24}}]

Arguing on the contrary, assume that $(\mathscr O, {\rm tr}_{K, \theta})$
is similar to a positive-definite even unimodular lattice
$(L, b)$ of rank $n$, where
\begin{equation}\label{40}
8\leqslant n\leqslant 48.
 \end{equation}
(the first inequality follows from the fact that $8$ divides
$n$, see, e.g., \cite[Chap. 7, \S6, Cor.\;18]{CS}).
Since
$(L, b)$ is unimodular, we have
\begin{equation}\label{di1}
|{\rm disc}\,(L, b)|=1,
\end{equation}
and  \eqref{di1} implies, by Lemma \ref{prp}\eqref{test},
that $(L, b)$ is primitive.  Therefore,
$(L, b)$ is isometric to $(\mathscr O, {\rm tr}_{K, \theta})$.
By  Lemma \ref{TRCM}, since $(L, b)$ is positive-definite,
the pro\-per\-ties listed in item
\eqref{co} of this lemma hold. In view of Lemma
\ref{prp}\eqref{test},\eqref{dgdg} and \eqref{di1}, we have
\begin{equation}\label{pdel}
\sqrt[n]{|{\rm disc}\,K/\mathbb Q|}=m.
\end{equation}
 By Lemma \ref{prp}\eqref{te}, since $(L, b)$ is even,
 $n/2\equiv 0\;{\rm mod}\,m$.
 This and \eqref{pdel} yield
 \begin{equation}\label{pdel2}
\sqrt[n]{|{\rm disc}\,K/\mathbb Q|}\leqslant n/2.
\end{equation}
On the other hand, by Lemma \ref{lowerDreal}\eqref{54},
Lemma \ref{lowerDCM},
and
\eqref{40}, for $n\leqslant 48$, we have the inequality
\begin{equation*}\label{pde3}
\sqrt[n]{|{\rm disc}\,K/\mathbb Q|}\!>\!
\begin{cases}(n+1)^2/2n\!>\!n/2&\mbox{if $K$ is a totally  real field},\\
n/2&\mbox{if $K$ is a CM-field},
\end{cases}
\end{equation*}
which contradicts \eqref{pdel2}.
\end{proof}

\section{Appendix 1: The $\mathbb Z^n$, $\mathbb A_n$,
$\mathbb D_n$,
and $\mathbb E_n$ lattices}\label{ap}

To be self-contained, here we briefly describe the lattices
$\mathbb Z^n$, $\mathbb A_n$, $\mathbb D_n$, and
$\mathbb E_n$ as they play a key role in this paper.
For details and a discussion of their properties see
\cite{Bo}, \cite{CS}, \cite{Ma}.

Let $\mathbb R^m$ be the $m$-dimensional
coordinate real vector space of rows
endowed with the standard Euclidean structure
\begin{equation}\label{multip}
\mathbb R^m\times \mathbb R^m\to \mathbb R,\quad
((x_1,\ldots, x_m), (y_1,\ldots, y_m)):=\textstyle \sum_{j=1}^m x_jy_j.
\end{equation}

Denote by $e_j$ the row $(0,\ldots, 0,1,0,\ldots, 0)$,
where the number of $0$'s to the left of $1$ is $j-1$.

If $L$ is the $\mathbb Z$-linear span of a set of linearly
independent elements of $\mathbb R^m$ such that
$b(L\times L)\subseteq \mathbb Z$, where $b$ is the
restriction of map \eqref{multip} to $L\times L$,
then $(L, b)$ is called a {\it lattice in} $\mathbb R^m$
and denoted just by  $L$.

With these notation and conventions, we have:

${\mathbb Z}^n$ is the lattice
$\{(x_1,\ldots, x_n)\mid  x_j\in \mathbb Z \;\;\mbox{for all $j$}\}$
in  $\mathbb R^n$.

 ${\mathbb A}_n$ is the lattice
$\{(x_1,\ldots, x_{n+1})\in \mathbb Z^{n+1}\mid
\textstyle\sum_{j=1}^{n+1}x_j
=0\}$
in $\mathbb R^{n+1}$.

${\mathbb D}_n$ is the lattice
$\{(x_1,\ldots, x_n)\in \mathbb Z^{n}\mid
\textstyle \sum_{j=1}^{n}x_j
\equiv\,0\;{\rm mod}\,2\}$
in $\mathbb R^{n}$,
$n\geqslant 4$.

 ${\mathbb E}_8$ is the lattice $\mathbb D_8\bigcup \big(\mathbb D_8+\tfrac{1}{2}(e_1+\cdots+e_8)\big)$ in $\mathbb R^{8}$.

 ${\mathbb E}_7$ is the orthogonal in $\mathbb E_8$ of
the sublattice $\mathbb Z(e_7+e_8)$.

${\mathbb E}_6$ is the orthogonal in $\mathbb E_8$ of
the sublattice $\mathbb Z(e_7+e_8)+\mathbb Z(e_6+e_8)$.

Each of these lattices except $\mathbb A_1$ is primitive. Each of them
except $\Z^n$
for every $n$
is even.

\section{Appendix 2: Cyclicity criterion}\label{ap2}

\begin{theorem}[{{\rm cyclicity criterion}}]\label{cdg}
Let $n>1$ and let $p$ be a prime  ra\-mi\-fied in $K$.
Then the fol\-lowing properties
of $\mathscr O$ are equivalent:
\begin{enumerate}[\hskip 2.2mm\rm(i)]
\item\label{i} The $p$-primary component of the
additive group of ring $\mathscr O/\mathfrak d$ is a  cyclic group
\textup(automatically nontrivial\textup).
\item\label{ii} The following conditions hold:
\begin{enumerate}[\hskip 2mm\rm(a)]
\item[$({\rm{ii}}_1)$]  in $\mathscr O$, there is exactly one
ramified prime ideal $\mathfrak p$ which
lies
over\;$p$;
\item[$({\rm {ii}}_2)$] $p$ is odd,
${\rm ord}_{\mathfrak p}\, p\mathscr O=2$,
and
$\mathscr O/\mathfrak p$ is the field of $p$ elements.
\end{enumerate}
\end{enumerate}
\end{theorem}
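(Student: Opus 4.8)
The plan is to reduce the question to a purely local computation at the primes of $\mathscr O$ lying over $p$. First I would use the factorization $\mathfrak d=\prod_{\mathfrak P}\mathfrak P^{\,{\rm ord}_{\mathfrak P}\mathfrak d}$ of the different into prime ideals together with the Chinese Remainder Theorem to write
\begin{equation*}
\mathscr O/\mathfrak d\;\cong\;\prod_{\mathfrak P}\mathscr O/\mathfrak P^{\,{\rm ord}_{\mathfrak P}\mathfrak d},
\end{equation*}
where $\mathfrak P$ runs over the prime ideals of $\mathscr O$. Each factor has order a power of the rational prime below $\mathfrak P$, so the $p$-primary component of $\mathscr O/\mathfrak d$ is exactly the product of the factors with $\mathfrak P$ lying over $p$; and since ${\rm ord}_{\mathfrak P}\mathfrak d\geqslant 1$ holds precisely when $\mathfrak P$ is ramified, the nontrivial factors are indexed by the ramified primes over $p$. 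As $p$ is ramified there is at least one such factor, which accounts for the parenthetical nontriviality in \eqref{i}.

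Next I would invoke the elementary fact that a product of finite abelian $p$-groups is cyclic if and only if at most one factor is nontrivial and that factor is cyclic: two nontrivial factors each surject onto $\mathbb Z/p$, yielding a noncyclic quotient $(\mathbb Z/p)^2$. Applied here, cyclicity of the $p$-primary component forces exactly one ramified prime $\mathfrak p$ over $p$ — this is condition $({\rm ii}_1)$ — together with cyclicity of the single local factor $\mathscr O/\mathfrak p^{\,d}$, where $d:={\rm ord}_{\mathfrak p}\mathfrak d$.

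The heart of the argument is the local cyclicity criterion. Writing $R:=\mathscr O_{\mathfrak p}$ for the localization, a discrete valuation ring with uniformizer $\pi$, residue field $\mathbb F_{p^{f}}$ of degree $f:=f_{\mathfrak p}$, and $pR=\pi^{e}R$ with $e:=e_{\mathfrak p}={\rm ord}_{\mathfrak p}\,p\mathscr O$, I have $\mathscr O/\mathfrak p^{\,d}\cong R/\pi^{d}R$. A finite abelian $p$-group $A$ is cyclic if and only if $\dim_{\mathbb F_p}A/pA\leqslant 1$, and here
\begin{equation*}
A/pA=R/(pR+\pi^{d}R)=R/\pi^{\min(e,d)}R,
\end{equation*}
which has $\mathbb F_p$-dimension $f\cdot\min(e,d)$ by filtering through the powers of $\pi$. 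Hence the local factor is cyclic if and only if $f\cdot\min(e,d)=1$, i.e. $f=1$ and $\min(e,d)=1$. Since $\mathfrak p$ is ramified, $e\geqslant 2$, so $\min(e,d)=1$ forces $d=1$. I would then close the loop with the standard relation between the different and ramification (see \cite{Neukirch}): ${\rm ord}_{\mathfrak p}\mathfrak d=e-1$ when $\mathfrak p$ is tamely ramified, $p\nmid e$, and ${\rm ord}_{\mathfrak p}\mathfrak d\geqslant e$ when wildly ramified. Wild ramification would give $d\geqslant e\geqslant 2$, so $d=1$ forces tameness and $e-1=d=1$, whence $e=2$ and, by tameness, $p\nmid 2$, i.e. $p$ is odd; together with $f=1$ this is exactly $({\rm ii}_2)$. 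The converse is immediate: under $({\rm ii}_1)$, $({\rm ii}_2)$ the prime $\mathfrak p$ is tame with $d=e-1=1$, so $\mathscr O/\mathfrak p^{\,d}=\mathscr O/\mathfrak p\cong\mathbb Z/p$ is the unique nontrivial factor and the $p$-primary component is cyclic.

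I expect the main obstacle to be the local computation in the third paragraph — correctly identifying the minimal number of generators of $\mathscr O/\mathfrak p^{\,d}$ as $f\cdot\min(e,d)$ and translating the resulting constraint $d=1$ into the arithmetic conditions via the tame/wild dichotomy for the different. Everything else is formal bookkeeping with the Chinese Remainder Theorem and the decomposition of $p$-groups.
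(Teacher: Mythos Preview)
Your proposal is correct and follows essentially the same route as the paper: decompose $\mathscr O/\mathfrak d$ via the Chinese Remainder Theorem, reduce cyclicity of the $p$-primary part to uniqueness of the ramified prime $\mathfrak p$ over $p$ plus cyclicity of the single factor $\mathscr O/\mathfrak p^{d}$, and then use Dedekind's tame/wild formula for ${\rm ord}_{\mathfrak p}\mathfrak d$ to pin down $e=2$, $f=1$, $p$ odd. The only cosmetic difference is in the local step: you compute $\dim_{\mathbb F_p}A/pA=f\cdot\min(e,d)$ directly, whereas the paper argues via the epimorphisms $\mathscr O/p\mathscr O\twoheadrightarrow\mathscr O/\mathfrak p^{e}\twoheadrightarrow\mathscr O/\mathfrak p^{e-1}$ to see that every nonzero element has order $p$; both arrive at the same constraint.
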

\begin{proof}
Let $\mathfrak r_1,\ldots, \mathfrak r_m$ be
all pairwise distinct
prime ideals of
$\mathscr O$
ramified in $K/\mathbb Q$. For every $\mathfrak r_i$,
there is a prime integer $r_i$ and a positive integer $f_i$ such that
\begin{equation}\label{rii}
\mathfrak r_i\cap \mathbb Z=r_i\mathbb Z\quad\mbox
{and}\quad |\mathscr O/{\mathfrak r}_i^{s}|=r_i^{f_is}\;\;
\mbox{for every positive integer $s$}
\end{equation}
(see, e.g., \cite[Chap.\;12, \S2, 3]{IR}). We put
\begin{equation}\label{nota}
e_i:={\rm ord}_{\mathfrak r_i}r_i\mathscr O.
\end{equation}
As $\mathfrak r_i$ is ramified, $e_i\geqslant 2$.
There are positive integers $d_1,\ldots, d_m$ such that
\begin{equation}\label{decdif}
\mathfrak d=\mathfrak r_1^{d_1}\cdots \mathfrak r_m^{d_m}
\end{equation}
(see, e.g., \cite[Chap.\;III, Thm.\;(2.6)]{Neukirch}).
By Dedekind's theorem (see, e.g., \cite[Thm. (2.6), p.\,199]{Neukirch}),
we have
\begin{equation}\label{De}
d_i
\geqslant e_i-1,\;
\mbox{where the equality holds if and only if $r_i\nmid e_i$}.
\end{equation}

By the Chinese remainder theorem (see, e.g., \cite[Props. 12.3.1]{IR}),
decomposition \eqref{decdif} yields
the following ring isomorphism:
\begin{equation}\label{ri}
\mathscr O/\mathfrak d\approx \mathscr O/{\mathfrak r}_1^{d_1}\oplus\cdots \oplus
\mathscr O/{\mathfrak r}_m^{d_m}.
\end{equation}

As every $r_i$ is prime, \eqref{ri},
\eqref{rii} imply that (i) is satisfied if and only if
the following two conditions (a) and (b) hold:

(a) there is exactly one $i$ such that $r_i=p$,

(b) the additive group of $\mathscr O/\mathfrak r_i^{d_i}$ is cyclic.

Clearly (a) is equivalent to $({\rm{ii}}_1)$.\;We
shall now show that (b) is equivalent to
$({\rm{ii}}_2)$ with $\mathfrak p=\mathfrak r_i$
and $p=r_i$; this will complete the proof.

Assume that (b) holds.
By \eqref{nota}, we have  ${\mathfrak r}_i^{e_i-1}
\supseteq {\mathfrak r}_i^{e_i}\supseteq r_i\mathscr O$. Hence
there are the ring epimorphisms
\begin{equation}\label{epi}
\mathscr O/r_i\mathscr O\twoheadrightarrow \mathscr O/{\mathfrak r}_i^{e_i}\twoheadrightarrow \mathscr O/{\mathfrak r}_i^{e_i-1}.
\end{equation}
Since $r_i$ is prime, it is the order of every nonzero element of
the additive group of $\mathscr O/r_i\mathscr O$.
The existence of
epimorphisms \eqref{epi} then implies that
\begin{equation}\label{ordd}
\begin{gathered}
r_i\;\; \mbox{is the order of every nonzero element of}\\[-1.5mm]
\mbox{the additive groups of $\mathscr O/{\mathfrak r}_i^{e_i}$
and $\mathscr O/{\mathfrak r}_i^{e_i-1}$}.
\end{gathered}
\end{equation}

We consider two possibilities stemming from
\eqref{De}:
\begin{enumerate}[\hskip 4.2mm \rm (i)]
\item[(T)] $d_i=e_i-1$ (tamely ramified ${\mathfrak r}_i$);
\item[(W)] $d_i\geqslant e_i$ (wildly ramified ${\mathfrak r}_i$).
\end{enumerate}

First, assume that (T) holds. Then from (b), \eqref{ordd}, and
\eqref{rii} we infer that
$f_i=d_i=1$, $|\mathscr O/\mathfrak r_i|=r_i$. Hence $e_i=2$, and, by \eqref{De}, $r_i$ is odd.
Thus, as claimed, in this case, $({\rm {ii}}_2)$ is fulfilled.

Next, assume that (W) holds. Then ${\mathfrak r}_i^{e_i}
\supseteq {\mathfrak r}_i^{d_i}$; whence there is a ring
epimorphism
\begin{equation}
\label{ippi}
{\mathscr O}/{\mathfrak r}_i^{d_i}
\twoheadrightarrow \mathscr O/{\mathfrak r}_i^{e_i}.
\end{equation}
From (b), \eqref{ordd}, \eqref{ippi}, and \eqref{rii} we
then infer that $r_i=|{\mathscr O}/{\mathfrak r}_i^{e_i}|=r_i^{f_ie_i}$
contrary to the inequality $e_i\geqslant 2$. Thus, (W) is impossible.

This completes the proof of (b)$\Rightarrow$$({\rm{ii}}_2)$
with $\mathfrak p=\mathfrak r_i$ and $p=r_i$.
The converse implication is immediate.
\end{proof}


\begin{thebibliography}{XX}

 \bibitem{BF0} E.\;Bayer-Fluckiger, {\it Definite hermitian forms and the cancellation of simple knots}, Arch. Math. {\bf 40} (1983), 182--185.

\bibitem{BM} E.\;Bayer-Fluckiger, J.\;Martinet, {\it R\'eseaux li\'es \`a des alg\`ebres semi-simples}, J. reine angew. Math. {\bf 415} (1994), 51--69.

 \bibitem{BF1} E.\;Bayer-Fluckiger, {\it Lattices and number fields},
 Contemporary Math. {\bf 241} (1999), 69--84.

\bibitem{BF4} E.\;Bayer-Fluckiger, {\it Ideal lattices}, in:
{\it A Panorama of Number Theory or The View from Baker's Garden},
Cambridhe University Press, Cambridge, 2002, pp.\;168--184.

\bibitem{BF2} E.\;Bayer-Fluckiger, {\it Upper bounds for Euclidean minima
of algebraic number fields}, J. Number theory {\bf 121} (2006), 305--323.

\bibitem{BS} Z.\;I.\;Borevich, I.\;R.\;Shafarevich,
{\it Number Theory}, Academic Press, Orlando, 1966.

\bibitem{Bo} N.\;Bourbaki, {\it Groupes et
Alg\'ebres de Lie}, Chaps. IV--VI, Hermann, Paris, 1968.

\bibitem{Bo2} N.\;Bourbaki, {\it Algebra {\rm II}}, Chapters 4--7,
Springer-Verlag, Berlin, 1990.

\bibitem{CP} P.\;Conner, R.\;Perlis, {\it A Survey of Trace Forms of Algebraic Number Fields},
Series in Pure Mathematics, Vol.  2, World Scientific, Singapore, 1984.

\bibitem{CS} J.\;H.\;Conway, N.\;J.\;A.\;Sloane, {\it Sphere Packing,
Lattices and Groups},
    Grund\-lehren der Mathematischen Wissenschaften, Bd. 290,
    Springer-Verlag,
    New York,
  1988.

\bibitem{CR} C.\;Curtis, I.\;Reiner, {\it Representation Theory
of Finite Groups and Associative Algebras}, 2nd ed.,
Interscience, New York, 1966.



\bibitem{Diaz} 	F.\;Diaz\;y\;Diaz,\;{\it Tables minorant la racine $n$-ie\'eme
du discriminant d'un corps de degr\'e $n$}, Publications
Mathematiques d'Orsay  80.06, Universit\'e de Paris-Sud,
D\'epartement de Math\'ematique, 1980. Available
at {\tt \footnotesize  http:$/\!\!/$sites.}
    {\tt \footnotesize  mathdoc.fr/PMO/feuilleter.php?id=PMO$\underline{\ }$1980}.

    \bibitem{IR} K.\;Ireland, M.\;Rosen, {\it A Classical
    Introduction to Modern Number Theory},
    Graduate Texts in Mathematics, Vol. 87,
    Springer-Verlag, New York, 1982.

   \bibitem{Koch}
   H.\;Koch, {\it Algebraic Number Theory},
   Encyclopaedia of Mathematical Sciences, Vol. 62,
   Springer-Verlag, Berlin, 1997.

   \bibitem{Kron} L. Kronecker, {\it Zwei S\"atze \"uber
   Gleichungen mit ganzzahligen Coefficienten}, J. Reine
   Angew. Math. {\bf 53} (1857), 173--175.


    \bibitem{Lang2} S. Lang, {\it Algebraic numbers},
    Addison-Wesley, Mass., 1964.

   \bibitem{Lang} S. Lang, {\it Algebra}, Addison-Wesley,
   Mass., 1965.


\bibitem{Ma} J.\;Martinet, {\it Perfect Lattices in
Euclidean Spaces}, Grundlehren der mathe\-ma\-ti\-schen Wissenschaften, Bd. 327, Springer-Verlag, Berlin, 2003.


     \bibitem{MH} J. Milnor, D. Husemoller, {\it Symmetric
     Bilinear Forms}, Ergebnisse der Ma\-the\-matik und
     ihrer Grenzgebiete, Bd. 73, Springer-Verlag, Berlin, 1973.


     \bibitem{Neukirch} J. Neukirch, {\it Algebraic Number
     Theory}, Springer, Berlin, 1999.

     \bibitem{P} V.\;L.\;Popov, {\it Root systems and root
     lattices in number fields}, in:
    Abstracts, XVII International conference {\it Algebra,
    Number Theory, and Discrete Geo\-met\-ry: Modern
    Problems, Applications and Problems of History},
    Leo Tol\-s\-toy TSPU, Tula, September 23--28, 2019,
    ISBN 978-5-6042450-5-7, pp. 286--289 (in Russian), available at
    {\tt \footnotesize  https:$/\!\!/$www.researchgate.net/profile/}
    {\tt \footnotesize  Vladimir$\underline{\ }$Popov12}

\bibitem{PZ} V.\;L.\;Popov, Yu.\;G.\;Zarhin, {\it Root systems
in number fields}, to appear in In\-di\-ana Univ. Math. J.,
    {\tt \footnotesize  arXiv:1808.01136v3} (2019).

    \bibitem{Ste} J.\;M.\;Steele, {\it An Introduction to the Art of
    Mathematical Inequalities. The Cau\-chy--Schwarz Master Class}, MAA
         Problem Books Series,
         Cambridge Uni\-ver\-sity Press, Cambridge, 2004.

\bibitem{TVN} M.\,Tsfaman, S.\,Vl\v{a}du\c{t}, D.\,Nogin, {\it Algebraic Geometry Codes:\;Advanced Chap\-ters}, Mathematical Surveys and Monographs, Vol. 238,
    American Ma\-the\-ma\-ti\-cal Society,
    Pro\-vi\-dence, RI, 2019.

    \bibitem{Wa} L.\;C.\;Washington, {\it Introduction to
    Cyclotomic Fields}, Springer-Verlag, New York, 1982.

    \bibitem{W}  E.\;Witt,\;{\it Spiegelungsgruppen und
    Aufz\"ahlung halbeinfacher Liescher Ringe}, Abh.
    Math. Sem. Univ. Hamb. {\bf 14} (1941), no. 1, 289--322.
        Repr.
        in:  E. Witt, {\it Collected Papers--Gesammelte
        Abhandlungen}, Springer Verlag, Berlin,
1998, pp. 213--246.

\bibitem{ZS} O. Zariski, P. Samuel, {\it Commutative Algebra},
Vol. 1, Van Nostrand, Princeton, NJ, 1958.



\end{thebibliography}
\end{document}